\date{}
\renewcommand{\uppercasenonmath}[1]{}
\numberwithin{equation}{section} \theoremstyle{plain}
\newtheorem*{thm*}{Main Theorem}
\newtheorem{thm}{Theorem}[section]
\newtheorem{cor}[thm]{Corollary}
\newtheorem*{cor*}{Corollary}
\newtheorem{lem}[thm]{Lemma}
\newtheorem*{lem*}{Lemma}
\newtheorem*{prop*}{Proposition}
\newtheorem{fact}[thm]{Fact}
\newtheorem*{fact*}{Fact}
\newtheorem*{que*}{Question}
\newtheorem{rem}[thm]{Remark}
\newtheorem*{rem*}{Remark}
\newtheorem*{ex*}{Example}
\newtheorem*{com*}{Construction}
\newtheorem{df}[thm]{Definition}
\newtheorem*{df*}{Definition}
\newtheorem*{ack*}{ACKNOWLEDGEMENTS}
\newcommand{\C}{\mbox{\rm C}}
\newcommand{\Ext}{\mbox{\rm Ext}}
\newcommand{\Hom}{\mbox{\rm Hom}}
\newcommand{\h}{\mbox{\rm H}}
\newcommand{\im}{\mbox{\rm Im}}
\newcommand{\G}{\mbox{\rm Gpd}}
\newcommand{\M}{\mbox{\rm Mod}}
\begin{document}
\begin{center}
{\Large \bf On triangle equivalences of stable categories}

\vspace{0.5cm}
\emph{In Memory of Professor Ragnar-Olaf Buchweitz}\\\

\footnotetext{* Corresponding author.

E-mail:~dizhenxing19841111@126.com (Z. Di),~liuzk@nwnu.edu.cn (Z. Liu), and~weijiaqun@njnu.edu.cn (J. Wei).}

 {\small Zhenxing Di$^{1}$, Zhongkui Liu$^{1}$ and Jiaqun Wei$^{1,2,*}$\\\

1.  Department of Mathematics, Northwest Normal University, \\ Lanzhou 730070, People's Republic of China \\

2.  {Institute of Mathematics, School of Mathematics Sciences, Nanjing Normal University, \\Nanjing 210023, People's Republic of China}\\
}\end{center}

\vskip.5cm \noindent{\bf Abstract.}
{We apply the Auslander-Buchweitz approximation theory to
show that the Iyama and Yoshino's subfactor triangulated category can be realized as a triangulated quotient.
Applications of this realization go in three directions.
Firstly, we recover both a result of Iyama and Yang and a result of the third author.
Secondly, we extend the classical Buchweitz's triangle equivalence
from Iwanaga-Gorenstein rings to Noetherian rings.
Finally, we obtain the converse of Buchweitz's triangle equivalence and a result of Beligiannis,
and give characterizations for Iwanaga-Gorenstein rings and Gorenstein algebras. }

\vskip.2cm

\noindent{\bf Key words:} {presilting subcategory; subfactor triangulated category; projective Frobenius subcategory; Gorenstein projective module.} \vskip.2cm

\noindent{\small {\bf Mathematics Subject Classification (2010)}: 13D05, 16E35, 18G25, 18E30.}

\section{\bf Introduction}

In the same conceptual framework of cluster tilting theory,
Iyama and Yoshino, working in the context of the theory of mutations,
proved in \cite{IYa2} that certain factors of extension closed subcategories of a triangulated category $\mathcal{T}$ are again triangulated.
More precisely, they showed that
if $(\mathcal{Z,Z})$ forms a $\mathcal{D}$-mutation pair in $\mathcal{T}$ (see \cite[Definition 2.5]{IYa2} or Definition \ref{Df of Mutation}),
where $\mathcal{Z}$ and $\mathcal{D}$ are subcategories of $\mathcal{T}$
such that $\mathcal{D}\subseteq\mathcal{Z}$, $\mathcal{Z}$ is extension-closed and $\Hom_{\mathcal{T}}(\mathcal{Z},\mathcal{D}[1]) = 0 = \Hom_{\mathcal{T}}(\mathcal{D},\mathcal{Z}[1])$,
then the subfactor category $\mathcal{Z}/[\mathcal{D}]$
possesses also the structure of a triangulated category
(see \cite[Section 4]{IYa2} for details).
This result played a key role in the development of mutation theory of tilting and silting objects (subcategories) in general triangulated categories
(see, for example, \cite{AI,IYa,NSZ,SO}).

Originated from the concept of injective envelopes,
the approximation theory has attracted increasing interest among scholars
and, hence, obtained the considerable development especially in the context of module categories since the fifties
(see, for example, \cite{AR,ASS,EJ2000}).
Inspired by the ideas of injective envelopes and projective covers,
Auslander and Buchweitz studied in \cite{AB} the maximal Cohen-Macaulay approximations for certain modules.
Indeed, they established their theory in the context of abelian categories,
and provided important applications in several settings.
Since the appearance of their work it has influenced numerous subsequent articles of researchers.
In particular, Mendoza Hern\'{a}ndez et al. developed in \cite{Msss,Msss1}
recently an analogous theory of approximations in the sense of Auslander and Buchweitz for triangulated categories.

The main purpose of this manuscript is to apply this Auslander-Buchweitz approximation theory
to give another description for the above Iyama and Yoshino's subfactor triangulated category.
Indeed, we obtain the following result,
which shows that under the condition that $\mathcal{D}$ is \emph{presilting} (that is, $\Hom_{\mathcal{T}}(\mathcal{D,D}[\geqslant1])=0$),
the subfactor triangulated category $\mathcal{Z}/[\mathcal{D}]$ can be realized as a triangulated quotient.
Here, denote by $\langle  \mathcal{D}\rangle$ (resp., $\langle  \mathcal{Z}\rangle$)
the smallest thick subcategory of $\mathcal{T}$ containing $\mathcal{D}$ (resp., $\mathcal{Z}$)
and by $\langle \mathcal{Z}\rangle/ \langle  \mathcal{D}\rangle$ the corresponding Verdier's triangulated quotient
(or simply, triangulated quotient) category.

\begin{thm}\label{triangulated quotient1}{\rm(=\,Theorem \ref{thm1})}
Let $\mathcal{D}\subseteq\mathcal{Z}$ be subcategories of $\mathcal{T}$
such that $(\mathcal{Z,Z})$ forms a $\mathcal{D}$-mutation pair.
Suppose that $\mathcal{D}$ is presilting.
Then there exists a triangle equivalence
$$\mathcal{Z}/[\mathcal{D}]\,\, \simeq \,\,\langle  \mathcal{Z}\rangle/ \langle  \mathcal{D}\rangle.$$
\end{thm}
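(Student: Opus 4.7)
The plan is to construct a triangle functor $\bar F\colon \mathcal Z/[\mathcal D] \to \langle\mathcal Z\rangle/\langle\mathcal D\rangle$ induced by the natural composite $\mathcal Z \hookrightarrow \langle\mathcal Z\rangle \to \langle\mathcal Z\rangle/\langle\mathcal D\rangle$, and to show that it is an equivalence. Since every object of $\mathcal D$ lies in $\langle\mathcal D\rangle$ and hence becomes zero in the Verdier quotient, the composite annihilates the ideal $[\mathcal D]$ of morphisms factoring through $\mathcal D$, so it descends to the desired $\bar F$. Compatibility with the triangulated structures should come essentially for free from the Iyama--Yoshino construction: the shift $\langle 1\rangle$ is defined via a chosen triangle $Z \to D_Z \to Z\langle 1\rangle \to Z[1]$ with $D_Z \in \mathcal D$, and since $D_Z$ vanishes in $\langle\mathcal Z\rangle/\langle\mathcal D\rangle$ one obtains a natural isomorphism $\bar F(Z\langle 1\rangle)\simeq \bar F(Z)[1]$; the octahedron used to produce the distinguished triangles of $\mathcal Z/[\mathcal D]$ then visibly maps to triangles in the Verdier quotient.

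For essential surjectivity, I would use that every object of $\langle\mathcal Z\rangle$ is a finite iterated extension of shifts of objects of $\mathcal Z$ (together with direct summands thereof). The Auslander--Buchweitz approximation theory for triangulated categories of Mendoza Hern\'andez et al., together with the mutation pair supplying, for each $Z \in \mathcal Z$, a triangle $Z\langle -1\rangle \to D \to Z$ with $D \in \mathcal D$ (thus identifying $Z[-1]$ with $Z\langle -1\rangle$ modulo $\langle\mathcal D\rangle$), should allow an induction on the complexity of this filtration to rewrite any such object, up to isomorphism in the quotient, as an object of $\mathcal Z$.

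The principal difficulty lies in fully faithfulness. A morphism $X \to Y$ in $\langle\mathcal Z\rangle/\langle\mathcal D\rangle$ with $X,Y\in\mathcal Z$ is represented by a roof $X \leftarrow W \to Y$ whose left leg has cone in $\langle\mathcal D\rangle$. I plan to replace $W$ by an object of $\mathcal Z$ via density, and then to split the resulting triangle $W \to X \to C \to W[1]$ with $C \in \langle\mathcal D\rangle$, modulo $[\mathcal D]$, by exploiting the orthogonality conditions $\Hom_{\mathcal T}(\mathcal Z,\mathcal D[1])=0=\Hom_{\mathcal T}(\mathcal D,\mathcal Z[1])$ combined with the presilting hypothesis $\Hom_{\mathcal T}(\mathcal D,\mathcal D[\geqslant 1])=0$. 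For faithfulness, a morphism $f\colon X\to Y$ in $\mathcal Z$ that vanishes in the Verdier quotient factors through some object of $\langle\mathcal D\rangle$, and the subtle part is to refine this into a factorisation through $\mathcal D$ itself. This refinement is the main obstacle: one has to collapse a finite filtration by $\mathcal D$-objects, repeatedly using the presilting vanishing to force all connecting morphisms to factor through $\mathcal D$, so that $f$ truly lies in $[\mathcal D](X,Y)$; this is precisely where the triangulated Auslander--Buchweitz machinery is indispensable and where the dimension bounds coming from the presilting assumption must be invoked.
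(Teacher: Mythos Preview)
Your overall architecture matches the paper's exactly: define $\bar F$ as the induced functor from the composite $\mathcal Z\hookrightarrow\langle\mathcal Z\rangle\to\langle\mathcal Z\rangle/\langle\mathcal D\rangle$, check it is a triangle functor via the defining triangles $Z\to D_Z\to Z\langle1\rangle\to Z[1]$, then prove density, fullness and faithfulness separately. Your treatment of the triangle-functor step and of essential surjectivity is essentially the paper's (the paper formalises the latter by writing $\langle\mathcal Z\rangle=(\widecheck{\mathcal Z})_+$ via Corollary~\ref{weak-generator-cogenerator} and then applying the Auslander--Buchweitz triangle of Theorem~\ref{A-B Approximation2}).

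Where your sketch diverges, and where the gaps lie, is in fullness and faithfulness. For fullness the paper does \emph{not} try to move $W$ into $\mathcal Z$. Instead it completes the left leg to a triangle $X[-1]\to Q\to W\to X$ with $Q\in\langle\mathcal D\rangle$ and decomposes $Q$ via the co-$t$-structure $({}_{\mathcal D}\mathcal U,\mathcal U_{\mathcal D})$ on $\langle\mathcal D\rangle$ (Fact~\ref{presilting CO-T}), obtaining a triangle $A\to Q\to B\to A[1]$ with $A$ built from non-positive shifts and $B$ from positive shifts of $\mathcal D$. The strengthened orthogonality (Remark~\ref{properties of presilting}) gives $\Hom(X[-1],B)=0$, so $X[-1]\to Q$ factors through $A$; one then gets a new roof through $W'$ with cone $A$, and $\Hom(A,Y)=0$ forces the right leg to descend to an honest map $X\to Y$. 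Your plan of ``replacing $W$ by an object of $\mathcal Z$ and splitting the triangle modulo $[\mathcal D]$'' does not produce a morphism $X\to Y$ without something like this co-$t$-structure step: even with $W\in\mathcal Z$, the cone of $W\to X$ lies only in $\langle\mathcal D\rangle$, not in $\mathcal Z$, so there is nothing to split in $\mathcal Z/[\mathcal D]$.

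For faithfulness the paper's argument is also different from, and cleaner than, your proposed filtration collapse. It first proves the key lemma $\mathcal Z\cap\langle\mathcal D\rangle=\mathcal D$ (Lemma~\ref{key for faithful}, resting on Lemma~\ref{keyl}: $\widehat{\mathcal D}=\mathcal D^{\perp_{i>0}}\cap\langle\mathcal D\rangle$ and $\mathcal D=\widehat{\mathcal D}\cap{}^{\perp_{i>0}}\mathcal D$). Then, given $f$ with $\bar F(f)=0$, it uses the already-established fullness to produce a one-sided inverse $\beta$ of the cofibre map $g$, takes the cone $C(\beta)$ \emph{inside} $\mathcal Z/[\mathcal D]$ (so $C(\beta)\in\mathcal Z$ by construction), observes $C(\beta)\in\langle\mathcal D\rangle$ since $\bar F(\beta)$ is an isomorphism, and concludes $C(\beta)\in\mathcal Z\cap\langle\mathcal D\rangle=\mathcal D$, whence $\beta$ is an isomorphism and $f=0$. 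Your direct approach---show that a factorisation of $f$ through $\langle\mathcal D\rangle$ refines to one through $\mathcal D$---can be made to work using the same co-$t$-structure decomposition, but you have not identified that tool, and without it the ``collapse a filtration'' step is not substantiated.
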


As a special case of this realization,
we recover the main result \cite[Theorem A]{W1} of the third author (see Corollary \ref{wei's}).
Recently, Iyama and Yang showed in \cite[Theorem 3.6]{IYa} that under some conditions the so-called silting reduction of $\mathcal{T}$
can be realized as a certain subfactor triangulated category of $\mathcal{T}$.
While according to \cite[Corollary 2.7]{W1},
we see that \cite[Theorem 3.6]{IYa} can be deduced from \cite[Theorem A]{W1}.
Thus, \cite[Theorem 3.6]{IYa} is also a consequence of our result (see Corollary \ref{Yang1}).

In 1987, Buchweitz \cite{Buch} studied the triangulated quotient category
\begin{center}
$D_{sg}(R):= D^\mathrm{b}(\mathrm{mod}\,R)/ K^\mathrm{b}(\mathrm{proj}\,R)$,
\end{center}
where $D^\mathrm{b}(\mathrm{mod}\,R)$ is the bounded derived category of finitely generated modules over a Noetherian ring $R$
and $K^\mathrm{b}(\mathrm{proj}\,R)$ is the bounded homotopy category of finitely generated projective modules,
under the name of ``\,stable derived category''.
In particular, he established in \cite{Buch} the following famous triangle equivalence when $R$ is an Iwanaga-Gorenstein ring
$$\quad\quad\quad\underline{\mathcal{G}p}\,\,\simeq\,\,D_{sg}(R)\quad\quad\quad(\dagger),$$
where $\underline{\mathcal {G}p}$ denotes the stable category of the Frobenius category of all finitely generated Gorenstein projective modules.
In the representation theory of finite-dimensional algebras,
this triangulated quotient category appeared in Rickard's work \cite{Rick}.
It is proved therein that this category is triangle equivalent to the stable module category over a self-injective algebra.
Later, this result was generalized to Gorenstein Artin algebras via the (co)tilting theory by Happel \cite{Ha1}.
Recently, Orlov \cite{Orlov} reconsidered this triangulated quotient category
and called $D_{sg}(R)$ the \emph{singularity category} of the ring $R$
because this quotient category reflects certain homological singularity of the ring $R$.

As applications of Theorem \ref{triangulated quotient1},
we extend the above classical Buchweitz's triangle equivalence ($\dagger$)
from Iwanaga-Gorenstein rings to Noetherian rings (see Corollary \ref{Gproj} and Remark \ref{Buchweitz}),
and obtain the converse of Buchweitz's triangle equivalence ($\dagger$), see Corollary \ref{inverse1}.

Besides, other triangulated quotient categories have also attracted increasing interest among scholars
(we refer the reader to \cite{CHEN} for some basic knowledge about this topic).
Beligiannis studied the triangulated quotient category $D^\mathrm{b}(\M\,R)/ K^\mathrm{b}(\mathrm{Proj}\,R)$ for an arbitrary ring $R$,
where $D^\mathrm{b}(\M\,R)$ is the bounded derived category of modules
and $K^\mathrm{b}(\mathrm{Proj}\,R)$ is the bounded homotopy category of projective modules (see \cite{BE1}).
Just as the singularity category,
this quotient category reflects also the homological singularity of the ring $R$,
and it treats modules which are not necessarily finitely generated.
It seems like to refer such a triangulated quotient category as the \emph{big singularity category} of the ring $R$
(note that over a Noetherian ring $R$ the homotopy category of all acyclic complexes of injective modules $K_{\textrm{ac}}(\textrm{Inj}\,R)$ is
compactly generated by $D_{sg}(R)$ (see \cite[Proposition 2.3]{HK} or \cite[Theorem 2.20]{SO1}).
The category $K_{\textrm{ac}}(\textrm{Inj}\,R)$ is also called the big singularity category in \cite{SO1}.
We would like to remind readers of the difference between the two settings).
In particular, Beligiannis showed in \cite[Theorem 6.9]{BE1} that $R$ has finite Gorenstein global dimension
if and only if there exists the triangle equivalence
\begin{center}
$\quad\quad\quad\underline{\mathcal{G}P}\,\,\cong\,\, D^\mathrm{b}(\M\,R)/ K^\mathrm{b}(\mathrm{Proj}\,R),$
\end{center}
where $\underline{\mathcal {G}P}$ denotes the stable category of the Frobenius category of all Gorenstein projective modules.
This result extends Buchweitz's triangle equivalence ($\dagger$) to the `big' version,
and provides the corresponding converse.

As another application of Theorem \ref{triangulated quotient1},
we also obtain the above Beligiannis' result (see Corollary \ref{inverse2}).

Recently, Bergh, J{\o}rgensen and Oppermann proved in \cite[Theorem 3.6]{BJS}
that if $R$ is either a left and right Artin ring or a commutative Noetherian local ring,
then there exists a triangle equivalence
$\underline{\mathcal {G}p} \simeq D^\mathrm{b}(\mathrm{mod}\,R)/ K^\mathrm{b}(\mathrm{proj}\,R)$
if and only if $R$ is Gorenstein.
As a further result,
we obtain the following result characterizing Iwanaga-Gorenstein rings and Gorenstein algebras.
According to their result,
we note that the equivalence of (1) and (3) is indeed a special case of \cite[Theorem 3.6]{BJS}.

\begin{cor}\label{m2}{\rm(=\,Corollary \ref{commutative N})}
Let $R$ be a left and right Noetherian ring.
Then the following statements are equivalent:

$(1)$ $R$ is Iwanaga-Gorenstein.

$(2)$ There exists a triangle equivalence
      $$\underline{\mathcal {G}P} \simeq D^\mathrm{b}(\M\,R)/ K^\mathrm{b}(\mathrm{Proj}\,R).$$
If $R$ is further an Artin algebra,
then the above two conditions are equivalent to

$(3)$  There exists a triangle equivalence
      $$\underline{\mathcal {G}p} \simeq D^\mathrm{b}(\mathrm{mod}\,R)/ K^\mathrm{b}(\mathrm{proj}\,R).$$
\end{cor}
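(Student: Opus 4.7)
The plan is to reduce conditions (2) and (3) to finiteness of Gorenstein projective dimensions via Corollaries \ref{inverse1} and \ref{inverse2}, and then appeal to the classical fact that, over a left and right Noetherian ring, such finiteness is equivalent to being Iwanaga-Gorenstein.

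For the equivalence (1) $\Leftrightarrow$ (2), I would apply Corollary \ref{inverse2} (our version of Beligiannis' theorem), which asserts that the triangle equivalence in (2) holds if and only if $R$ has finite (left) Gorenstein global dimension. The direction (1) $\Rightarrow$ (2) is then immediate, since over an Iwanaga-Gorenstein ring every module has Gorenstein projective dimension bounded by $\max\{\mathrm{id}\,_RR,\mathrm{id}\,R_R\}$. For the converse (2) $\Rightarrow$ (1), I would extract the self-injective dimensions of $_RR$ and $R_R$ from the uniform bound on Gorenstein projective dimension: given any injective $E$, take an Auslander-Buchweitz approximation $0\to K\to G\to E\to 0$ with $G$ Gorenstein projective and $K$ of finite projective dimension, apply $\Hom(-,R)$, and deduce that $\Ext^n(-,R)$ vanishes for all $n$ sufficiently large, which forces $_RR$ and symmetrically $R_R$ to have finite injective dimension.

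For the Artin-algebra case, (1) $\Leftrightarrow$ (3) proceeds in the same spirit with Corollary \ref{inverse1} playing the role of Corollary \ref{inverse2}: namely, (3) is equivalent to every finitely generated $R$-module having finite Gorenstein projective dimension. The Matlis duality $D:\mathrm{mod}\,R\to \mathrm{mod}\,R^{\mathrm{op}}$ interchanges finitely generated projectives and injectives, so finite Gorenstein projective dimension of the finitely generated modules $D(R)$ and $D(R^{\mathrm{op}})$ translates into finite injective dimension of $_RR$ and $R_R$, giving (1). The reverse implication is again automatic from the definition of an Iwanaga-Gorenstein ring.

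The main obstacle is the ``big'' implication (2) $\Rightarrow$ (1): in the general two-sided Noetherian setting the injective cogenerators of $\M\,R$ need not be finitely generated, so the Artin-algebra duality shortcut is unavailable. The Auslander-Buchweitz machinery underlying Theorem \ref{triangulated quotient1}, combined with the fact that the Gorenstein projectives form one half of a complete cotorsion pair whose right half consists of modules of finite projective dimension, is what makes the uniform bound on the injective dimension of $R$ available; the rest of the argument is formal.
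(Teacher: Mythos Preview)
Your overall plan---reduce (2) and (3) to finiteness of Gorenstein projective dimensions via Corollaries~\ref{inverse2} and~\ref{inverse1}, then identify those conditions with being Iwanaga-Gorenstein---is exactly the route the paper takes. The difference lies in how the second step is handled. The paper does not attempt to re-derive the equivalence ``finite (right) Gorenstein global dimension $\Leftrightarrow$ Iwanaga-Gorenstein'' for two-sided Noetherian rings; it simply cites \cite[Theorem~12.3.1]{EJ2000}. For the Artin-algebra case it additionally invokes \cite[Theorem~4.16]{BE1} to pass from ``every finitely generated module has finite $\G_R$'' to a uniform bound, and then appeals to \cite[Theorem~12.3.1]{EJ2000} once more.

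Your direct arguments for this second step have genuine gaps. In $(2)\Rightarrow(1)$, the hypothesis is one-sided: Corollary~\ref{inverse2} only gives finite \emph{right} Gorenstein global dimension. From this one can indeed extract $\mathrm{id}(R_R)<\infty$ (most cleanly by observing that $\Ext^i_R(M,R)=0$ for all right modules $M$ and $i$ beyond the bound, rather than by approximating injectives). But the word ``symmetrically'' does not produce $\mathrm{id}({}_RR)<\infty$: nothing in (2) controls left modules, and passing to the other side is precisely the nontrivial content of \cite[Theorem~12.3.1]{EJ2000} (equivalently, of the Beligiannis--Emmanouil description via $\mathrm{spli}$ and $\mathrm{silp}$). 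Your approximation-of-injectives computation yields information about $\Ext^n(E,R)$ for injective $E$, which is neither $\mathrm{id}(R_R)$ nor $\mathrm{id}({}_RR)$.

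The Matlis-duality argument for $(3)\Rightarrow(1)$ is likewise incomplete. The module you call $D(R^{\mathrm{op}})$ lives in $\mathrm{mod}\,R^{\mathrm{op}}$, so hypothesis~(3) says nothing about its Gorenstein projective dimension. And for the module $D({}_RR)\in\mathrm{mod}\,R$, duality gives $\mathrm{id}({}_RR)=\mathrm{pd}_R\,D({}_RR)$, whereas (3) only yields finite \emph{Gorenstein} projective dimension of $D({}_RR)$; the two need not coincide without further argument. The paper avoids this by citing the uniform bound from \cite[Theorem~4.16]{BE1} and then \cite[Theorem~12.3.1]{EJ2000} again.

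In short: keep your reduction via Corollaries~\ref{inverse1} and~\ref{inverse2}, but replace the ad hoc Ext/duality computations by the cited classical results; the left--right passage is where the real work is, and it is already in the literature.
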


We conclude this section by summarizing the contents of this article.
Section 2 contains necessary notions and results for use throughout this article.
In Section 3, we present the proof of Theorem \ref{triangulated quotient1}.
As applications of Theorem \ref{triangulated quotient1},
some triangle equivalences on stable categories
will be displayed in Section 4,
where we give also the proof of Corollary \ref{m2}.

\section{\bf Preliminaries}

In this section, we fix some notation.
We recall the Auslander-Buchweitz approximation triangles in a triangulated category,
and the triangle structure of the Iyama and Yoshino's subfactor triangulated category associated to a mutation pair.
We recall definitions of a presilting subcategory and a co-t-structure,
and give some necessary facts about these notions.

\subsection{Some notation}

\emph{Throughout this article},
by the term \emph{``subcategory''} we always mean a full additive subcategory of an additive category
closed under isomorphisms and direct summands.

Let $\mathcal{A}$ be an additive category.
For an ideal $\mathcal{I}$,
denote by $\mathcal{A}/\mathcal{I}$ the category
whose objects are objects of $\mathcal{A}$ and
whose morphisms are elements of
\begin{center}
$\Hom_{\mathcal{A}}(M,N)/\mathcal{I}(M,N)\quad$ for all $M,N\in \mathcal{A}/\mathcal{I}$.
\end{center}
Suppose that $\mathcal{D}$ is a subcategory of $\mathcal{A}$.
Denote by $[\mathcal{D}]$ the ideal of $\mathcal{A}$ consisting of all morphisms factoring through some object in $\mathcal{D}$.
Thus, we have a category $\mathcal{A}/[\mathcal{D}]$,
which is also an additive category.

\emph{Throughout this article}, let $R$ denote an associative ring with identity.
Denote by $\M\,R$ the category of all right $R$-modules,
by mod$\,R$ the category of all finitely generated right $R$-modules,
by Proj$\,R$ the category of all projective right $R$-modules,
and by proj$\,R$ the category of all finitely generated projective right $R$-modules.

\emph{Throughout this article},
let $\mathcal{T}$ be a triangulated category.
We will denote by [1] the shift functor of any triangulated category unless otherwise stated.
Suppose that $\mathcal{C}$ is a subcategory of $\mathcal{T}$.
Denote by $\langle C\rangle$ the \emph{smallest thick subcategory} of $\mathcal{T}$ containing $\mathcal{C}$.
For some integer $n$, set
\begin{center}
$\,\,\,\quad\mathcal{C}^{\perp_{i>n}}=\{N\in\mathcal{T}\,|\,\Hom_{\mathcal{T}}(M,N[>n]) = 0~{\rm for~all}~M \in \mathcal{C} \},$

$\,\,\,\quad ^{\perp_{i>n}}\mathcal{C}\,=\{N\in\mathcal{T}\,|\,\Hom_{\mathcal{T}}(N, M[>n]) = 0~{\rm for~all}~M \in \mathcal{C} \}.$
\end{center}

Following the notions in \cite{W2},
the subcategory $\mathcal{C}$ is called \emph{extension-closed}
if for any triangle
$$U\to V \to W \to U[1]$$ in $\mathcal{T}$ with $U,W \in \mathcal{C}$, it holds that $V \in \mathcal{C}$.
It is \emph{resolving} (resp., \emph{coresolving})
if it is further closed under the functor $[-1]$ (resp., $[1]$).
Note that $\mathcal{C}$ is resolving (resp., coresolving) if and only if,
for any triangle
\begin{center}
$U \to V \to W \to U[1]\quad\quad$ (resp., $W \to V \to U \to W[1]$)
\end{center}
in $\mathcal{T}$ with $W \in \mathcal{C}$,
it holds that $U \in \mathcal{C} \Leftrightarrow V \in \mathcal{C}$.
It is easy to see that $\mathcal{C}^{\perp_{i>0}}$ (resp., $^{\perp_{i>0}}\mathcal{C}$) is coresolving (resp., resolving).

Let now $\mathcal{A}$ be an abelian category.
A complex $X$ is often displayed as a sequence
$$\xymatrix@C=0.5cm{
\cdots \ar[r] & X_{n-1} \ar[rr]^{\delta^X_{n-1}} && X_n \ar[rr]^{\delta^X_n} && X_{n+1} \ar[r]^{} & \cdots}$$
of objects in $\mathcal{A}$ with $\delta^X_{n}\delta^X_{n-1}$ for all $n\in \mathbb{Z}$.
The $n$th \emph{homology} of $X$ is defined as Ker$\delta^X_n/\im\delta^X_{n-1}$
and denoted by H$_n(X)$.
Set $\C_n(X)=\mathrm{Coker}\delta^X_{n-1}$.
We say that two complexes $X$ and $Y$ are \emph{equivalent}, and denoted by $X\simeq Y$ \cite[A.1.11, p. 164]{lwc1},
if they can be linked by a sequence of quasi-isomorphisms with arrows in alternating directions.

Let $\mathcal {E}$ be a subcategory of $\mathcal{A}$.
Denote by $D^{\rm b}(\mathcal{A})$ the bounded derived category of $\mathcal{A}$,
by $D^{-}(\mathcal{A})$ the derived category of bounded-above complexes,
and by $K^{\rm b}(\mathcal{E})$ the bounded homotopy category with each complex constructed by objects in $\mathcal{E}$.

\subsection{Auslander-Buchweitz approximation triangles}

We recall in this subsection the Auslander-Buchweitz
approximation triangles established by Mendoza Hern\'{a}ndez et al. in \cite{Msss}.

Let $\mathcal{W}$ and $\mathcal{X}$ be subcategories of $\mathcal{T}$.
For a non-negative integer $n$,
denote by $(\widehat{\mathcal{X}})_n$ (resp., $(\widecheck{\mathcal{X}})_n$) the class consisting of all objects $T$
satisfying that there exists a series of triangles
\begin{center}
$T_{i+1}\to X_i \to T_i \to T_{i+1}[1]\quad\quad$ (resp., $T_i\to X_i \to T_{i+1} \to T_i[1]$)
\end{center}
in $\mathcal{T}$ with $0\leqslant i \leqslant n$ such that $T_0 =T$, $T_{n+1}=0$ and each $X_i\in \mathcal{X}$.
We use the symbol $\widehat{\mathcal{X}}$ (resp., $\widecheck{\mathcal{X}}$) to stand for
the class consisting of all objects $K$
satisfying that there is a non-negative integer $m$ such that $K\in(\widehat{\mathcal{X}})_m$ (resp., $K\in(\widecheck{\mathcal{X}})_m$).
Note that $0\in\mathcal{X}$ by assumption.
It is easy to see that $\widehat{\mathcal{X}}$ (resp., $\widecheck{\mathcal{X}}$) is closed under the functor $[1]$ (resp., $[-1]$).

Recall that $\mathcal{W}$ is called a \emph{weak-cogenerator} in $\mathcal{X}$ \cite[Definition 5.1]{Msss}
if $\mathcal{W}\subseteq\mathcal{X}$ and for any object $X\in \mathcal{X}$,
there exists a triangle $$X\to W \to X' \to X[1]$$ in $\mathcal{T}$ with $X'\in\mathcal{X}$ and $W\in\mathcal{W}$.
The subcategory $\mathcal{W}$ is said to be $\mathcal{X}$-\emph{injective} if $\Hom_{\mathcal{T}}(X,W[\geqslant1])=0$
for any object $W\in\mathcal{W}$ and any object $X\in\mathcal{X}$.
Dually, one have the notions of $\mathcal{W}$ being a \emph{weak-generator} in $\mathcal{X}$ and $\mathcal{X}$-\emph{projective}.
We say that $\mathcal {W}$ is a \emph{weak-generator-cogenerator} in $\mathcal {X}$
if it is both an $\mathcal{X}$-projective weak-generator and an $\mathcal{X}$-injective weak-cogenerator in $\mathcal {X}$.

The following two results will be used frequently in the sequel.

\begin{thm} $($\cite[Theorem 5.4]{Msss}$)$\label{A-B Approximation1}
Let $\mathcal{W} \subseteq \mathcal{X}$ be subcategories of $\mathcal{T}$.
Suppose that $\mathcal{X}$ is closed under extensions and $\mathcal{W}$ is a weak-cogenerator in $\mathcal{X}$.
Then for any object $M \in \widehat{\mathcal{X}}$,
there exist triangles
\begin{center}
$K_M \to X_M \to M\to K_M[1]\quad$ and $\quad M \to K^M \to X^M\to M[1]$
\end{center}
in $\mathcal{T}$ with $X_M, X^M\in \mathcal{X}$ and $K_M,K^M\in \widehat{\mathcal{W}}$.
\end{thm}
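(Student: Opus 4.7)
The plan is to establish both triangles simultaneously by induction on the smallest integer $n$ such that $M \in (\widehat{\mathcal{X}})_n$. For the base case $n = 0$ we have $M \in \mathcal{X}$: the first triangle is trivial via $K_M = 0$ and $X_M = M$, and the second is exactly the defining triangle of a weak-cogenerator, namely $M \to W \to X' \to M[1]$ with $W \in \mathcal{W}$ and $X' \in \mathcal{X}$, so $K^M = W$ and $X^M = X'$.

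For the inductive step with $n \geq 1$, I would pick a defining triangle $T_1 \to X_0 \to M \to T_1[1]$ for $M$, with $X_0 \in \mathcal{X}$ and $T_1 \in (\widehat{\mathcal{X}})_{n-1}$, and apply the inductive hypothesis to $T_1$ to secure in particular a second-type triangle
\begin{equation*}
T_1 \to K^{T_1} \to X^{T_1} \to T_1[1]
\end{equation*}
with $K^{T_1} \in \widehat{\mathcal{W}}$ and $X^{T_1} \in \mathcal{X}$. Forming the homotopy pushout $D$ of the two morphisms $T_1 \to X_0$ and $T_1 \to K^{T_1}$ produces the pair of triangles $X_0 \to D \to X^{T_1} \to X_0[1]$ and $K^{T_1} \to D \to M \to K^{T_1}[1]$. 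Since $\mathcal{W} \subseteq \mathcal{X}$ and $\mathcal{X}$ is closed under extensions, the first of these forces $D \in \mathcal{X}$, and the second is then the first approximation triangle for $M$, with $X_M = D$ and $K_M = K^{T_1}$.

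The second approximation triangle for $M$ will be built from the first one just constructed: apply the weak-cogenerator property to $D \in \mathcal{X}$ to obtain a triangle $D \to W \to Y \to D[1]$ with $W \in \mathcal{W}$ and $Y \in \mathcal{X}$, then invoke the octahedral axiom on the composition $K^{T_1} \to D \to W$. This yields a triangle $M \to S \to Y \to M[1]$ together with a triangle $K^{T_1} \to W \to S \to K^{T_1}[1]$ describing the cone $S$. Concatenating the latter with a $\mathcal{W}$-resolution of $K^{T_1}$ exhibits $S$ as belonging to $(\widehat{\mathcal{W}})_{m+1}$ whenever $K^{T_1} \in (\widehat{\mathcal{W}})_m$, so the choices $K^M = S$ and $X^M = Y$ complete the induction.

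The main obstacle, I expect, is the bookkeeping with the octahedral axiom and verifying that the homotopy pushout really delivers the two triangles in the claimed form; once this is set up, extension closure of $\mathcal{X}$ forces $D \in \mathcal{X}$, and the concatenation of $\mathcal{W}$-resolutions automatically increases the $\widehat{\mathcal{W}}$-dimension by one at each step, so the propagation of the induction is essentially formal.
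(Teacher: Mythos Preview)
The paper does not give its own proof of this theorem; it is quoted verbatim from \cite[Theorem 5.4]{Msss} and used as a black box. Your argument is correct and is essentially the standard Auslander--Buchweitz induction transplanted to the triangulated setting, which is exactly how the cited source proceeds: induct on the $\widehat{\mathcal{X}}$-length, use the second-type approximation of $T_1$ together with the octahedral axiom (your homotopy pushout) to manufacture the first-type approximation of $M$, and then feed that back through a weak-cogenerator triangle and another octahedron to obtain the second-type approximation of $M$.
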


Dually, one has

\begin{thm}\label{A-B Approximation2}
Let $\mathcal{V} \subseteq \mathcal{Y}$ be subcategories of $\mathcal{T}$.
Suppose that $\mathcal{Y}$ is closed under extensions and $\mathcal{V}$ is a weak-generator in $\mathcal{Y}$.
Then for any object $N \in \widecheck{\mathcal{Y}}$, there exist triangles
\begin{center}
$N \to Y^N \to L^N \to N[1]\quad$ and $\quad Y_N\to L_N\to N\to Y_N[1]$
\end{center}
in $\mathcal{T}$ with $Y^N,Y_N \in \mathcal{Y}$ and $L^N,L_N\in \widecheck{\mathcal{V}}$.
\end{thm}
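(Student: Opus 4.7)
The theorem is presented as a dual of Theorem \ref{A-B Approximation1}, so the plan is to deduce it by passing to the opposite triangulated category $\mathcal{T}^{\mathrm{op}}$, equipped with shift functor $[-1]$, and invoking Theorem \ref{A-B Approximation1} there. Concretely, I would regard the datum $(\mathcal{V},\mathcal{Y})$ as sitting inside $\mathcal{T}^{\mathrm{op}}$, verify the hypotheses needed to apply Theorem \ref{A-B Approximation1}, and then translate the two resulting triangles back to $\mathcal{T}$.

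First I would verify that the hypotheses transport correctly under opposition. Extension-closure is self-dual, so $\mathcal{Y}$ remains closed under extensions in $\mathcal{T}^{\mathrm{op}}$. A triangle $Y' \to V \to Y \to Y'[1]$ in $\mathcal{T}$ witnessing that $\mathcal{V}$ is a weak-generator in $\mathcal{Y}$ becomes, upon reversing arrows, a triangle of the form $Y \to V \to Y' \to Y[1]_{\mathcal{T}^{\mathrm{op}}}$, so that $\mathcal{V}$ is a weak-cogenerator in $\mathcal{Y}$ when viewed inside $\mathcal{T}^{\mathrm{op}}$. Similarly, a chain of triangles $T_i \to X_i \to T_{i+1} \to T_i[1]$ expressing $N \in (\widecheck{\mathcal{Y}})_m$ becomes, after reversal, the chain expressing $N \in (\widehat{\mathcal{Y}})_m$ in $\mathcal{T}^{\mathrm{op}}$. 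Hence $\widecheck{\mathcal{Y}}$ computed in $\mathcal{T}$ coincides with $\widehat{\mathcal{Y}}$ computed in $\mathcal{T}^{\mathrm{op}}$, and likewise $\widecheck{\mathcal{V}}$ coincides with $\widehat{\mathcal{V}}$ under the same identification.

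With these identifications in place, I would apply Theorem \ref{A-B Approximation1} to $\mathcal{V} \subseteq \mathcal{Y}$ inside $\mathcal{T}^{\mathrm{op}}$ and to the object $N$; this yields two triangles in $\mathcal{T}^{\mathrm{op}}$ of the forms
$$K_N \to Y_N \to N \to K_N[1]_{\mathcal{T}^{\mathrm{op}}} \quad \text{and} \quad N \to K^N \to Y^N \to N[1]_{\mathcal{T}^{\mathrm{op}}},$$
with $Y_N, Y^N \in \mathcal{Y}$ and $K_N, K^N \in \widehat{\mathcal{V}}$. Rotating and reading them back in $\mathcal{T}$, and setting $L^N := K^N$ and $L_N := K_N$, gives precisely the two triangles asserted by the statement. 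The main thing to be careful about — and essentially the only subtle point — is the bookkeeping for arrow directions, shift and rotation under duality, so that one recovers the triangles in the exact forms $N \to Y^N \to L^N \to N[1]$ and $Y_N \to L_N \to N \to Y_N[1]$ rather than some rotated variants. Beyond this formal dualisation, no genuinely new argument is required.
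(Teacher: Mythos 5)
Your proof is correct and is exactly what the paper intends: the paper's ``proof'' of Theorem~\ref{A-B Approximation2} consists solely of the words ``Dually, one has,'' and your dualisation through $\mathcal{T}^{\mathrm{op}}$ (with shift $[-1]$) supplies the bookkeeping that phrase leaves implicit. The only blemish is a cosmetic labelling mismatch at the very end (the first $\mathcal{T}^{\mathrm{op}}$-triangle reads back in $\mathcal{T}$ as $N\to Y_N\to K_N\to N[1]$, so it should be matched with $N\to Y^N\to L^N\to N[1]$ by relabelling $Y_N\mapsto Y^N$, $K_N\mapsto L^N$, and symmetrically for the other triangle), which does not affect the substance of the argument.
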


\subsection{Presilting and thick subcategories}\label{semi-thick}

In this subsection,
we mainly recall the definition of a presilting subcategory and give
some necessary facts on subcategories arising from a presilting subcategory.

\begin{df}\label{df of silting}$($\cite[Definition 2.1]{AI}$)$
{\rm Let $\mathcal{M}$ be a subcategory of $\mathcal{T}$.
Then $\mathcal{M}$ is called \emph{presilting} if
$\Hom_{\mathcal{T}}(M,M'[\geqslant1]) = 0$ for all objects $M, M'\in \mathcal{M}$.}
\end{df}

Let $\mathcal{M}$ be a presilting subcategory of $\mathcal{T}$.
We use the symbol $_{\mathcal {M}}\mathscr{X}$ (resp., $\mathscr{X}_{\mathcal {M}}$) to denote
the subcategory of $\mathcal {M}^{\perp_{i>0}}$ (resp., $^{\perp_{i>0}}\mathcal {M}$)
consisting of all objects $N$ such that there exist triangles
\begin{center}
$ N_{i+1}\to M_i\to N_i\to N_{i+1}[1]$
\quad
(resp., $N_{i}\to M_i\to N_{i+1}\to N_i[1]$)
\end{center}
in $\mathcal{T}$ such that $N_0=N$, $N_i\in\mathcal {M}^{\perp_{i>0}}$
(resp., $N_i\in {^{\perp_{i>0}}\mathcal {M}}$) and $M_i\in\mathcal {M}$ for all $i\geqslant0$.
It is easy to see that $\widehat{\mathcal {M}}\subseteq\, _{\mathcal{M}}\mathscr{X} \subseteq \mathcal {M}^{\perp_{i>0}}$
and $\widecheck{\mathcal {M}}\subseteq\,\mathscr{X}_{\mathcal {M}} \subseteq{ ^{\perp_{i>0}}\mathcal {M}}$.

\begin{lem}\label{necessary facts}$($\cite[Lemma 2.2]{W1}$)$
Let $\mathcal{M}$ be a presilting subcategory of $\mathcal{T}$.
Then the following statements hold:

$(1)$ $\widecheck{\mathcal {M}}$ and $\mathscr{X}_{\mathcal {M}}$ are resolving.

$(2)$ $\widehat{\mathcal {M}}$ and $_{\mathcal {M}}\mathscr{X}$ are coresolving.
\end{lem}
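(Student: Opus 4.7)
The assertions (1) and (2) are interchanged by passing to $\mathcal{T}^{\mathrm{op}}$ (with $\mathcal{M}$ self-dual as a subcategory), so I would prove only (2), namely that $\widehat{\mathcal{M}}$ and ${}_{\mathcal{M}}\mathscr{X}$ are both extension-closed and closed under the shift~$[1]$.

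Closure under~$[1]$ comes essentially for free by padding. Given any $T$ in either subcategory with its defining sequence of triangles, one prepends the trivial triangle $T\to 0\to T[1]\to T[1]$: since $0\in\mathcal{M}$ and $T$ already lies in $\mathcal{M}^{\perp_{i>0}}$ (which is manifestly closed under $[1]$), this extends the sequence and exhibits $T[1]$ in the same class.

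For extension closure I plan to run a triangulated horseshoe argument. Let $X\to Y\to Z\to X[1]$ be a triangle with $X,Z$ in the subcategory, and fix the first-stage approximations $X_1\to M_X\to X\to X_1[1]$ and $Z_1\to M_Z\to Z\to Z_1[1]$ with $M_X,M_Z\in\mathcal{M}$. A preliminary inclusion $\widehat{\mathcal{M}}\subseteq\mathcal{M}^{\perp_{i>0}}$ would be established by induction on resolution length, applying $\Hom_{\mathcal{T}}(M,-)$ to the first resolution triangle and using the presilting vanishing $\Hom_{\mathcal{T}}(\mathcal{M},\mathcal{M}[\geqslant 1])=0$; for ${}_{\mathcal{M}}\mathscr{X}$ the inclusion is built into the definition. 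In either case $\Hom_{\mathcal{T}}(M_Z,X[1])=0$, so the composite $M_Z\to Z\to X[1]$ vanishes and $M_Z\to Z$ lifts to a morphism $\psi\colon M_Z\to Y$ compatible with $Y\to Z$. Together with the composite $M_X\to X\to Y$, the map $(\,\cdot\,,\psi)\colon M_X\oplus M_Z\to Y$ is formed; let $Y_1$ denote its fibre, giving the triangle $Y_1\to M_X\oplus M_Z\to Y\to Y_1[1]$ with middle term in~$\mathcal{M}$.

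The main technical obstacle is the ensuing diagram chase: via iterated application of the octahedral axiom to the compositions $(M_X\oplus M_Z)\to Y\to Z$ and $(M_X\oplus M_Z)\to M_Z\to Z$, together with the further vanishing $\Hom_{\mathcal{T}}(Z_1,M_X[1])=0$ (or $\Hom_{\mathcal{T}}(M_Z,X_1[1])=0$, which again follows since $X_1\in\mathcal{M}^{\perp_{i>0}}$) to split an auxiliary triangle, one should identify $Y_1$ with the middle term of an exact triangle $X_1\to Y_1\to Z_1\to X_1[1]$ — the triangulated analogue of the classical horseshoe lemma. Granted this, $Y_1$ again belongs to the subcategory of strictly smaller total resolution length: for $\widehat{\mathcal{M}}$ one inducts on $\mathrm{length}(X)+\mathrm{length}(Z)$, while for ${}_{\mathcal{M}}\mathscr{X}$ one iterates the construction, invoking extension-closure of $\mathcal{M}^{\perp_{i>0}}$ to keep each $Y_i$ inside $\mathcal{M}^{\perp_{i>0}}$. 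Either way $Y$ acquires the required $\mathcal{M}$-resolution and lies in the subcategory, completing the proof of coresolvingness.
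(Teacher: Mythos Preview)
The paper does not prove this lemma; it is quoted from \cite[Lemma~2.2]{W1} without argument, so there is no in-paper proof to compare against. Your triangulated horseshoe approach is the standard one and is correct.

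One simplification for the step you flag as the main obstacle: rather than chasing octahedra by hand and invoking the auxiliary vanishing $\Hom_{\mathcal T}(M_Z,X_1[1])=0$ to split an intermediate triangle, observe directly that $(p_X,\,(fp_X,\psi),\,p_Z)$ defines a morphism from the split triangle $M_X\to M_X\oplus M_Z\to M_Z\xrightarrow{0} M_X[1]$ to $X\to Y\to Z\xrightarrow{h} X[1]$; the only nontrivial square is the last one, and there $hp_Z=hg\psi=0$ since $p_Z$ factors through $Y$. The $3\times 3$ lemma (itself a consequence of the octahedral axiom) then produces the triangle $X_1\to Y_1\to Z_1\to X_1[1]$ on fibres in one stroke.

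One small omission: for ${}_{\mathcal M}\mathscr X$, membership in $\mathcal M^{\perp_{i>0}}$ is part of the definition, so you should also record that $Y=Y_0$ itself lies in $\mathcal M^{\perp_{i>0}}$; this is immediate from extension-closure of $\mathcal M^{\perp_{i>0}}$ applied to the original triangle $X\to Y\to Z\to X[1]$.
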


In the rest of this subsection,
we consider thick subcategories of $\mathcal{T}$.
Let $\mathcal{H}$ be a subcategory of $\mathcal{T}$.
Define
\begin{center}
$({\mathcal{H}})_{+}:=\{ N\in\mathcal{T } \mid N\cong L[i]$ for some object $L\in {\mathcal{H}}$ and some integer $i\geqslant 0\}$.
\end{center}
\begin{center}
$({\mathcal{H}})_{-}:=\{ N\in\mathcal{T } \mid N\cong L[i]$ for some object $L\in {\mathcal{H}}$ and some integer $i\leqslant 0\}$.
\end{center}

\begin{lem}\label{thick}\cite[Lemma 2.1]{W1}
Let $\mathcal{H}$ be a subcategory of $\mathcal{T}$.

$(1)$ If $\mathcal{H}$ is resolving,
then $\langle\mathcal{H}\rangle=({\mathcal{H}})_{+}=\widehat{\mathcal{H}}$.

$(2)$ If $\mathcal{H}$ is coresolving,
then $\langle\mathcal{H}\rangle=({\mathcal{H}})_{-}=\widecheck{\mathcal{H}}$.
\end{lem}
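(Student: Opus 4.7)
\emph{Proof proposal.} The plan is to prove (1); statement (2) follows by the dual argument obtained from reversing all triangles, exchanging $[-1]$ for $[1]$, and $\widehat{\,\cdot\,}$ for $\widecheck{\,\cdot\,}$. I would establish the circular chain of inclusions $(\mathcal{H})_{+}\subseteq\widehat{\mathcal{H}}\subseteq\langle\mathcal{H}\rangle\subseteq(\mathcal{H})_{+}$.

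For the first inclusion, given $N\cong L[i]$ with $L\in\mathcal{H}$ and $i\geqslant 0$, I would exhibit $N$ as an object of $(\widehat{\mathcal{H}})_{i}$ by setting $T_{k}=L[i-k]$ for $0\leqslant k\leqslant i$ and $T_{i+1}=0$: the standard triangle $L[i-k-1]\to 0\to L[i-k]\to L[i-k]$ supplies $X_{k}=0\in\mathcal{H}$ for $k<i$, while $0\to L\to L\to 0$ supplies $X_{i}=L\in\mathcal{H}$. For the second inclusion, I would induct on the least $n$ with $N\in(\widehat{\mathcal{H}})_{n}$: the outermost defining triangle $T_{1}\to X_{0}\to T_{0}\to T_{1}[1]$ has $X_{0}\in\mathcal{H}$ and $T_{1}\in(\widehat{\mathcal{H}})_{n-1}\subseteq\langle\mathcal{H}\rangle$ by induction, so the thickness of $\langle\mathcal{H}\rangle$ forces $N=T_{0}\in\langle\mathcal{H}\rangle$.

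The substantive inclusion is the third, $\langle\mathcal{H}\rangle\subseteq(\mathcal{H})_{+}$, which I would obtain by verifying that $(\mathcal{H})_{+}$ is itself thick and contains $\mathcal{H}$. Since $\mathcal{H}$ is resolving and hence closed under $[-1]$, iteration produces a chain $\mathcal{H}\subseteq\mathcal{H}[1]\subseteq\mathcal{H}[2]\subseteq\cdots$, so $(\mathcal{H})_{+}=\bigcup_{n\geqslant 0}\mathcal{H}[n]$ is an increasing union. Closure of $(\mathcal{H})_{+}$ under $[1]$ is immediate from the definition; closure under $[-1]$ uses the resolving hypothesis only in the boundary case $i=0$; closure under direct summands follows because each $[-i]$ is an autoequivalence and $\mathcal{H}$ is summand-closed by the standing convention on subcategories. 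The remaining item, extension-closure, is the crux: given a triangle $A\to B\to C\to A[1]$ with $A,C\in(\mathcal{H})_{+}$, I would choose $n$ large enough that both $A$ and $C$ lie in the single layer $\mathcal{H}[n]$, apply the autoequivalence $[-n]$ to the triangle, and invoke extension-closedness of $\mathcal{H}$ to deduce $B[-n]\in\mathcal{H}$, i.e.\ $B\in\mathcal{H}[n]\subseteq(\mathcal{H})_{+}$.

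The main obstacle is exactly this extension-closure step, and the resolving hypothesis is what enables it: closure of $\mathcal{H}$ under $[-1]$ forces the layers $\mathcal{H}[n]$ to form a nested chain, so that both endpoints of any triangle can be pulled into a single $\mathcal{H}[n]$ by one uniform shift. Without this, two objects of $(\mathcal{H})_{+}$ might sit at incompatible layers and extensions between them would be out of reach of $\mathcal{H}$ alone.
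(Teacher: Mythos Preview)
The paper does not supply its own proof of this lemma; it is quoted verbatim from \cite[Lemma 2.1]{W1} and treated as an imported fact. There is therefore no in-paper argument to compare against.

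Your proposal is correct and complete. The circular chain $(\mathcal{H})_{+}\subseteq\widehat{\mathcal{H}}\subseteq\langle\mathcal{H}\rangle\subseteq(\mathcal{H})_{+}$ is the natural way to organise the argument, and each link is justified: the first by the explicit triangles you write down (using $0\in\mathcal{H}$), the second by induction and thickness, and the third by checking that $(\mathcal{H})_{+}$ is itself thick. Your identification of the key point---that resolving forces the layers $\mathcal{H}[n]$ to be nested, so that any two objects of $(\mathcal{H})_{+}$ can be shifted into a common $\mathcal{H}[n]$ where extension-closure of $\mathcal{H}$ applies---is exactly right and is precisely where the hypothesis is consumed.
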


\begin{cor}\label{weak-generator-cogenerator}
Let $\mathcal{C}$ be a subcategory of $\mathcal{T}$ closed under extensions.
If $\mathcal{C}$ admits a weak-generator-cogenerator,
then $\langle\mathcal{C}\rangle=(\widecheck{\mathcal{C}})_{+}=(\widehat{\mathcal{C}})_{-}$.
\end{cor}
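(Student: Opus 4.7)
The plan is to reduce to Lemma \ref{thick} by showing that $\widecheck{\mathcal{C}}$ is resolving and $\widehat{\mathcal{C}}$ is coresolving in $\mathcal{T}$. Once both properties are established, Lemma \ref{thick}(1) gives $\langle\widecheck{\mathcal{C}}\rangle=(\widecheck{\mathcal{C}})_{+}$, and since $\mathcal{C}\subseteq\widecheck{\mathcal{C}}\subseteq\langle\mathcal{C}\rangle$ (the first inclusion comes from the trivial coresolution of length zero; the second by induction on coresolution length, using thickness of $\langle\mathcal{C}\rangle$), we obtain $\langle\widecheck{\mathcal{C}}\rangle=\langle\mathcal{C}\rangle$, yielding the first equality. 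The second equality follows by the dual application of Lemma \ref{thick}(2) to $\widehat{\mathcal{C}}$.

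Closure of $\widecheck{\mathcal{C}}$ under $[-1]$ (and of $\widehat{\mathcal{C}}$ under $[1]$) is already noted in the preliminaries: prepend to an existing coresolution of $N$ the trivial triangle $N[-1]\to 0\to N\to N$, using $0\in\mathcal{C}$. The principal task is therefore to establish extension-closure of $\widecheck{\mathcal{C}}$, since extension-closure of $\widehat{\mathcal{C}}$ is obtained by an entirely dual argument based on Theorem \ref{A-B Approximation1} and the weak-cogenerator property of $\mathcal{W}$.

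For extension-closure of $\widecheck{\mathcal{C}}$, I would induct on the sum $m+n$, where $A\in(\widecheck{\mathcal{C}})_m$ and $C\in(\widecheck{\mathcal{C}})_n$ are the outer terms of a triangle $A\to B\to C\to A[1]$. The base case $m=n=0$ is immediate from extension-closure of $\mathcal{C}$. For the inductive step, assume without loss of generality $m\geq 1$ and take a defining triangle $A\to X_0^A\to A_1\to A[1]$ with $X_0^A\in\mathcal{C}$ and $A_1\in(\widecheck{\mathcal{C}})_{m-1}$. Form the triangulated pushout of $A\to X_0^A$ along $A\to B$ to produce an object $P$ fitting in two triangles: $X_0^A\to P\to C\to X_0^A[1]$ and $B\to P\to A_1\to B[1]$. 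The first triangle has coresolution-length sum $0+n<m+n$, so by induction $P\in\widecheck{\mathcal{C}}$ with a controlled bound on its coresolution length. Combining the second triangle with a defining triangle of $P$ through the octahedral axiom then produces $B\to X_0^B\to B_1\to B[1]$ with $X_0^B\in\mathcal{C}$ and $B_1$ fitting into a further extension amenable to the inductive hypothesis, concluding that $B\in\widecheck{\mathcal{C}}$.

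The main obstacle is to track the coresolution lengths carefully through the pushout and octahedral steps so that the induction strictly decreases; concretely, one needs an auxiliary bound of the form ``for a triangle $W\to P\to Z$ with $W\in\mathcal{C}$ and $Z\in(\widecheck{\mathcal{C}})_k$, the middle term $P$ has coresolution length bounded by a function of $k$''. The Auslander-Buchweitz approximation Theorem \ref{A-B Approximation2}, available because $\mathcal{W}$ is a weak-generator in $\mathcal{C}$, supplies the structural input that makes this bound achievable and the induction go through.
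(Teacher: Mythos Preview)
Your overall strategy matches the paper's proof exactly: show that $\widecheck{\mathcal{C}}$ is resolving (and $\widehat{\mathcal{C}}$ coresolving), invoke Lemma~\ref{thick}, and then use $\langle\widecheck{\mathcal{C}}\rangle=\langle\mathcal{C}\rangle$. The paper dispatches the needed closure properties in a single line (``it is easy to see that both $\widecheck{\mathcal{C}}$ and $\widehat{\mathcal{C}}$ are closed under extensions \emph{and direct summands}''), whereas you spell out an inductive argument for extension-closure.

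There is, however, a genuine gap: you never address closure of $\widecheck{\mathcal{C}}$ under direct summands. In this paper's conventions a ``subcategory'' is by definition closed under direct summands, so Lemma~\ref{thick} cannot be applied to $\widecheck{\mathcal{C}}$ until this is verified; and independently of conventions, one cannot conclude $(\widecheck{\mathcal{C}})_{+}=\langle\mathcal{C}\rangle$ without knowing that $(\widecheck{\mathcal{C}})_{+}$ is thick, which again needs summand closure. This is precisely the place where the weak-generator-cogenerator hypothesis does real work --- it is not, as your last paragraph suggests, primarily a device for the extension-closure induction. In fact extension-closure of $\widecheck{\mathcal{C}}$ holds whenever $\mathcal{C}$ is extension-closed, with no auxiliary hypothesis: first show $(\widecheck{\mathcal{C}})_m\ast\mathcal{C}\subseteq(\widecheck{\mathcal{C}})_m$ by your pushout argument (this is where the step ``$X_0^A\to P\to C$ with $X_0^A,C\in\mathcal{C}$, hence $P\in\mathcal{C}$'' lands you directly); then for general $n$, write $C\in(\widecheck{\mathcal{C}})_{n-1}[-1]\ast\mathcal{C}$ and use associativity of $\ast$ together with the inductive hypothesis and the case $n=0$.

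A smaller issue: your phrase ``assume without loss of generality $m\geq 1$'' is not justified, since $A$ and $C$ play asymmetric roles in the triangle $A\to B\to C\to A[1]$; the reorganised induction just described avoids this problem.
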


\begin{proof}
Note that $\mathcal{C}$ admits a weak-generator-cogenerator  by assumption.
It is easy to see that both $\widecheck{\mathcal{C}}$ and $\widehat{\mathcal{C}}$ are closed under extensions and direct summands.
Hence, $\widecheck{\mathcal{C}}$ (resp., $\widehat{\mathcal{C}}$) is resolving (resp., coresolving),
and so $(\widecheck{\mathcal{C}})_{+}=\langle\widecheck{\mathcal{C}}\rangle$ and
$(\widehat{\mathcal{C}})_{+}=\langle\widehat{\mathcal{C}}\rangle$ by Lemma \ref{thick}.
However, it is clear that $\langle\widecheck{\mathcal{C}}\rangle=\langle\mathcal{C}\rangle$
(resp., $\langle\widehat{\mathcal{C}}\rangle=\langle\mathcal{C}\rangle$).
Therefore, the result follows.
\end{proof}

Suppose that $\mathcal{M}$ is a presilting subcategory of $\mathcal{T}$.
According to \cite[Lemma 5.3(2)]{Msss1},
we know that $\mathcal{M}$ is closed under extensions.
Moreover, it is obvious that $\mathcal{M}$ admits itself as a weak-generator-cogenerator.
Thus, by Corollary \ref{weak-generator-cogenerator}, we obtain

\begin{cor}\label{lemm+}
Let $\mathcal{M}$ be a presilting subcategory of $\mathcal{T}$.
Then $\langle\mathcal{M}\rangle=(\widecheck{\mathcal{M}})_{+}=(\widehat{\mathcal{M}})_{-}$.
\end{cor}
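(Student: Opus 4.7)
The plan is to apply the immediately preceding Corollary \ref{weak-generator-cogenerator} with $\mathcal{C}:=\mathcal{M}$; once this is done, both equalities $\langle\mathcal{M}\rangle=(\widecheck{\mathcal{M}})_{+}$ and $\langle\mathcal{M}\rangle=(\widehat{\mathcal{M}})_{-}$ drop out in a single stroke. So the real task is to verify the two hypotheses of that corollary for $\mathcal{M}$: namely, that $\mathcal{M}$ is closed under extensions, and that $\mathcal{M}$ admits a weak-generator-cogenerator.

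For closure under extensions I would simply cite \cite[Lemma 5.3(2)]{Msss1}, which records exactly this property for any presilting subcategory (alternatively, one can extract it directly from the long exact Hom-sequences together with the presilting vanishing $\Hom_{\mathcal{T}}(\mathcal{M},\mathcal{M}[\geqslant 1])=0$, using the additive-subcategory convention). For the second hypothesis I would observe that $\mathcal{M}$ itself already serves as a weak-generator-cogenerator in $\mathcal{M}$: the $\mathcal{M}$-projectivity and $\mathcal{M}$-injectivity conditions of Section 2 coincide verbatim with the presilting vanishing, while the required weak-cogenerator and weak-generator approximation triangles are supplied, for every $M\in\mathcal{M}$, by the trivial split triangles
$$M\xrightarrow{1_M} M\to 0\to M[1]\qquad\text{and}\qquad 0\to M\xrightarrow{1_M} M\to 0;$$
this uses only that $0\in\mathcal{M}$, which is automatic under the standing convention that subcategories are full additive and closed under summands.

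I do not foresee a genuine obstacle: the technical content is packaged upstream in Corollary \ref{weak-generator-cogenerator}, which itself reduces, via Lemma \ref{necessary facts}, to Lemma \ref{thick} applied to the resolving subcategory $\widecheck{\mathcal{M}}$ and the coresolving subcategory $\widehat{\mathcal{M}}$. The present corollary is thus a clean specialization, and its only delicate point is the verbal unpacking of the two hypotheses described above.
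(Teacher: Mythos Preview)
Your proposal is correct and follows exactly the paper's approach: the paper also cites \cite[Lemma 5.3(2)]{Msss1} for extension-closure, remarks that $\mathcal{M}$ is its own weak-generator-cogenerator, and then invokes Corollary~\ref{weak-generator-cogenerator}. One small inaccuracy in your final parenthetical: Corollary~\ref{weak-generator-cogenerator} does not go through Lemma~\ref{necessary facts} (which is specific to presilting $\mathcal{M}$) but establishes the resolving/coresolving properties of $\widecheck{\mathcal{C}}$ and $\widehat{\mathcal{C}}$ directly from the weak-generator-cogenerator hypothesis before appealing to Lemma~\ref{thick}; this does not, however, affect your argument.
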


\subsection{Mutation pair, subfactor triangulated category and co-t-structure}

We recall in this subsection the triangle structure of the Iyama and Yoshino's subfactor triangulated category associated to a mutation pair,
and the definition of a co-t-structure.

\begin{df}\label{Mutation pair}$($\cite[Sections 4]{IYa2}$)$\label{Df of Mutation}
{\rm Let $\mathcal{D}$ and $\mathcal{Z}$ be subcategories of $\mathcal{T}$
such that $\mathcal{D}\subseteq\mathcal{Z}$.
The pair $(\mathcal{Z},\mathcal{Z})$ is called a $\mathcal{D}$-\emph{mutation pair}
if the following conditions hold:

(1) $\mathcal{Z}$ is closed under extensions in $\mathcal{T}$.

(2) For any object $Z \in \mathcal{Z}$,
there exist triangles
\begin{center}
$Z^+ \to D^+ \to Z \to Z^+[1]\quad$ and $\quad Z\to D^- \to Z^- \to Z[1]$
\end{center}
in $\mathcal{T}$ with $D^{\pm} \in \mathcal{D}$ and $ Z^{\pm}\in \mathcal{Z}$.

(3) $\Hom_{\mathcal{T}}(\mathcal{Z},\mathcal{D}[1]) = 0 = \Hom_{\mathcal{T}}(\mathcal{D},\mathcal{Z}[1])$.}
\end{df}

\begin{rem}\label{properties of presilting}{\rm
If $\mathcal{D}$ is further a presilting subcategory of $\mathcal{T}$ in the above definition,
then it is not hard to check that $$\Hom_{\mathcal{T}}(\mathcal{Z},\mathcal{D}[\geqslant1]) = 0 = \Hom_{\mathcal{T}}(\mathcal{D},\mathcal{Z}[\geqslant1]).$$
Therefore, the condition (2) implies that $\mathcal{D}$ is indeed a weak-generator-cogenerator in $\mathcal{Z}$ in this case.}
\end{rem}

\begin{thm}\label{triangulated structure}$($\cite[Definition 4.1 and Theorem 4.2]{IYa2}$)$
Let $\mathcal{D}\subseteq \mathcal{Z}$ be subcategories of $\mathcal{T}$
such that $(\mathcal{Z,Z})$ forms a $\mathcal{D}$-mutation pair.
Then the additive quotient category $\mathcal{Z}/[\mathcal{D}]$ has the structure of a triangulated category
with respect to the following shift functor and triangles:

$(1)$ For an object $Z \in \mathcal{Z}$,
take a fixed triangle
$$Z \overset{}{\rightarrow} D_Z \overset{}{\rightarrow} Z\langle1\rangle \overset{}{\rightarrow}Z[1] $$
in $\mathcal{T}$ with $D_Z\in\mathcal{D}$ and $Z\langle1\rangle\in \mathcal{Z}$ $($see the condition (2) of Definition \ref{Mutation pair}$)$.
Then $\langle1\rangle$ gives a well-defined auto-equivalence
of $\mathcal{Z}/[\mathcal{D}]$, which is the shift functor of $\mathcal{Z}/[\mathcal{D}]$.

$(2)$ Suppose that
$$Z\overset{f}{\longrightarrow} Z'\overset{g}{\longrightarrow} Z'' \overset{h}{\longrightarrow}Z[1]$$
is a triangle in $\mathcal{T}$ with $Z, Z', Z'' \in \mathcal{Z}$.
Consider the following commutative diagram of triangles:
$$\xymatrix{Z \ar[r]^{f} \ar@{=}[d]& Z' \ar[r]^{g}\ar[d] & Z'' \ar[r]^{h}\ar[d]^{\delta} & Z[1]\ar@{=}[d] \\
Z \ar[r]^{f'} & D_Z \ar[r]^{g'}& Z\langle1\rangle \ar[r]^{h'}& Z[1]}$$
Then we have a complex $Z\overset{[f]}{\longrightarrow} Z'\overset{[g]}{\longrightarrow} Z'' \overset{[\delta]}{\longrightarrow}Z\langle1\rangle$ in $\mathcal{Z}/[\mathcal{D}]$.
The triangles in $\mathcal{Z}/[\mathcal{D}]$ are defined as the complexes which are isomorphic to a complex obtained in this way.
\end{thm}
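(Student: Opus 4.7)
The plan is to verify, in order, the four implicit claims of the statement: (a) the object $Z\langle 1\rangle$ is well-defined up to isomorphism in $\mathcal{Z}/[\mathcal{D}]$, independently of the chosen triangle; (b) the assignment extends to an additive endofunctor $\langle 1\rangle$ of $\mathcal{Z}/[\mathcal{D}]$; (c) this endofunctor is an auto-equivalence, with quasi-inverse $\langle -1\rangle$ built from the dual approximation triangles of Definition \ref{Df of Mutation}(2); and (d) the class of candidate distinguished triangles described in part (2) satisfies the axioms (TR1)--(TR4). The universal tool throughout is the vanishing $\Hom_{\mathcal{T}}(\mathcal{Z},\mathcal{D}[1])=0=\Hom_{\mathcal{T}}(\mathcal{D},\mathcal{Z}[1])$ from Definition \ref{Df of Mutation}(3), which both permits lifting of morphisms through the defining triangles and forces any two such lifts to differ by maps factoring through $\mathcal{D}$.

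For (a), given a second triangle $Z\to D'\to X'\to Z[1]$ with $D'\in\mathcal{D}$ and $X'\in\mathcal{Z}$, the composite $X[-1]\to Z\to D'$ lies in $\Hom_{\mathcal{T}}(X,D'[1])\subseteq\Hom_{\mathcal{T}}(\mathcal{Z},\mathcal{D}[1])=0$, so $Z\to D'$ factors through $Z\to D_Z$, and the triangle axiom in $\mathcal{T}$ produces a morphism of triangles with middle component $\alpha\colon X\to X'$. The symmetric construction yields $\beta\colon X'\to X$, and a diagram chase using the same vanishing shows that $\beta\alpha-\mathrm{id}_X$ and $\alpha\beta-\mathrm{id}_{X'}$ each factor through an object of $\mathcal{D}$. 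For (b), the identical lifting procedure converts any $f\colon Z\to Z'$ into a morphism $f\langle 1\rangle$ between the shifts; two lifts differ by a map whose restriction to $D_Z$ vanishes, so by the triangle for $Z'\langle 1\rangle$ it factors through $\mathcal{D}$, giving well-definedness in $\mathcal{Z}/[\mathcal{D}]$; a dual check shows that morphisms factoring through $\mathcal{D}$ induce the zero map on shifts. For (c), the triangles $Z^+\to D^+\to Z\to Z^+[1]$ of Definition \ref{Df of Mutation}(2) furnish $\langle -1\rangle$ by the same recipe, and splicing the two triangles associated to each object produces canonical isomorphisms $Z\langle 1\rangle\langle -1\rangle\cong Z\cong Z\langle -1\rangle\langle 1\rangle$ in $\mathcal{Z}/[\mathcal{D}]$.

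The verification of (d) is then the bulk of the work. Axiom (TR1) is handled by embedding any given $f\colon Z\to Z'$ into a triangle of $\mathcal{T}$ whose third term lies in $\mathcal{Z}$ by the extension closure (after first replacing $f$ by $(f,Z\to D_Z)\colon Z\to Z'\oplus D_Z$ if needed to place the cone in $\mathcal{Z}$), and axiom (TR2) reduces to identifying, via the vanishing conditions, the rotated boundary of a triangle in $\mathcal{T}$ with the $\langle 1\rangle$-connecting morphism. Axiom (TR3) is obtained by lifting any commutative square of candidate triangles to $\mathcal{T}$, applying (TR3) there to fill in the third vertical arrow, and using the uniqueness-mod-$[\mathcal{D}]$ of (b) to check compatibility with $\langle 1\rangle$.

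The main obstacle is the octahedral axiom (TR4): starting with three composable triangles in $\mathcal{Z}/[\mathcal{D}]$, I will lift them to $\mathcal{T}$, apply the octahedron in $\mathcal{T}$, and then need to check that (i) the newly introduced middle term of the octahedron still belongs to $\mathcal{Z}$, which follows from extension closure, and (ii) the connecting morphism to $Z\langle 1\rangle$ arising from the octahedron agrees, in $\mathcal{Z}/[\mathcal{D}]$, with the one dictated by the definition of triangles in part (2). The second point is the principal subtlety, because passing between different completions of the defining triangle for $Z\langle 1\rangle$ introduces ambiguity precisely by morphisms factoring through $\mathcal{D}$; reconciling all of these choices simultaneously across the four faces of the octahedron is what makes the diagram descend coherently to $\mathcal{Z}/[\mathcal{D}]$, and here I expect every vanishing condition of Definition \ref{Df of Mutation}(3) to be used at least once.
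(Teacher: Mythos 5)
This theorem is quoted directly from Iyama--Yoshino \cite{IYa2} (Definition 4.1 and Theorem 4.2); the paper gives no proof of it, so there is no in-paper argument to compare against. Your outline does reproduce the architecture of Iyama and Yoshino's own proof: exploit the orthogonality $\Hom_{\mathcal T}(\mathcal Z,\mathcal D[1])=0=\Hom_{\mathcal T}(\mathcal D,\mathcal Z[1])$ to make $\langle1\rangle$ a well-defined auto-equivalence of $\mathcal Z/[\mathcal D]$, and then verify (TR1)--(TR4) by lifting to $\mathcal T$, invoking the triangulated structure there, and descending through extension closure of $\mathcal Z$ and the orthogonality.

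Two places where the sketch underestimates what is required. First, in (TR1) the modification $(f,a)\colon Z\to Z'\oplus D_Z$ (with $a\colon Z\to D_Z$ the approximation) is never optional: the cone of $f$ alone need not lie in $\mathcal Z$, and you need the octahedron relating $\mathrm{cone}(a)=Z\langle1\rangle$ and $\mathrm{cone}(f,a)$ to realize the latter as an extension of $Z'$ by $Z\langle1\rangle$, hence in $\mathcal Z$. Second, and more seriously, (TR2) does not ``reduce to identifying'' two boundary maps. The rotated triangle of $\mathcal T$ has third term $Z[1]$, which is generally not in $\mathcal Z$; and the sequence $Z'\xrightarrow{g}Z''\xrightarrow{\delta}Z\langle1\rangle$ is \emph{not} part of a $\mathcal T$-triangle, because $\delta g$ is merely zero in $\mathcal Z/[\mathcal D]$ (it factors through $D_Z$), not zero in $\mathcal T$. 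One must instead run the (TR1) construction on $g\colon Z'\to Z''$ to produce a genuine $\mathcal T$-triangle with third term in $\mathcal Z$, and then identify that third term with $Z\langle1\rangle$ and the resulting boundary with $-[f]\langle1\rangle$, using the uniqueness-mod-$[\mathcal D]$ from your step (a). That identification, iterated across all four faces of the octahedron, is the technical heart of the Iyama--Yoshino argument, as you anticipate for (TR4). The plan is correct in outline and coincides with the approach of the cited source.
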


\begin{df}\label{co-t-structure}\cite{BON,pau}\,\,{\rm
A \emph{co-t-structure} on $\mathcal{T}$ is a pair $(\mathcal{A,B})$ of subcategories of $\mathcal{T}$ such that

(1) $\mathcal{A}[-1]\subseteq \mathcal{A}$ and $\mathcal{B}[1]\subseteq \mathcal{B}$,

(2) Hom${_\mathcal{T} (\mathcal{A}[-1],\mathcal{B}) = 0}$, and

(3) $\mathcal{T} = \mathcal{A}[-1]\ast \mathcal{B}$.}
\end{df}

\begin{fact}\label{presilting CO-T}
Let $\mathcal{D}$ be a presilting subcategory of $\mathcal{T}$.
According to \cite[Theorem 5.5]{Msss1},
we know that the pair $({_{\mathcal{D}}\mathcal{U}},\mathcal{U}_{\mathcal{D}})$ forms a co-t-structure on $\langle\mathcal{D}\rangle$.
Here, the symbol ${_{\mathcal{D}}\mathcal{U}}$ (resp.,$\mathcal{U}_{\mathcal{D}}$) stands for
the smallest extension-closed subcategory of $\mathcal{T}$ containing $\mathcal{D}[\leqslant 0]$ (resp.,$\mathcal{D}[\geqslant 1]$).
\end{fact}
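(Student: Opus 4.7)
The plan is to verify the three defining axioms of a co-t-structure (Definition \ref{co-t-structure}) for the pair $(\mathcal{A},\mathcal{B})=({_{\mathcal{D}}\mathcal{U}},\mathcal{U}_{\mathcal{D}})$ on $\langle\mathcal{D}\rangle$, exploiting the presilting hypothesis and the descriptions of $\langle\mathcal{D}\rangle$ furnished by Corollary \ref{lemm+}.

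The shift-closure axiom (1) is immediate: since $\mathcal{D}[\leqslant 0][-1]=\mathcal{D}[\leqslant -1]\subseteq\mathcal{D}[\leqslant 0]$, the subcategory ${_{\mathcal{D}}\mathcal{U}}[-1]$ — itself extension-, iso-, and summand-closed — is forced by the minimality of ${_{\mathcal{D}}\mathcal{U}}$ to lie inside ${_{\mathcal{D}}\mathcal{U}}$; the inclusion $\mathcal{U}_{\mathcal{D}}[1]\subseteq\mathcal{U}_{\mathcal{D}}$ is dual. The Hom-vanishing axiom (2) I would first check on generators: for $D,D'\in\mathcal{D}$, $k\leqslant-1$, $l\geqslant 1$ one has $\Hom_{\mathcal{T}}(D[k],D'[l])=\Hom_{\mathcal{T}}(D,D'[l-k])$ with $l-k\geqslant 2$, which vanishes by the presilting property (Definition \ref{df of silting}). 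I would then bootstrap this to the full identity $\Hom_{\mathcal{T}}({_{\mathcal{D}}\mathcal{U}}[-1],\mathcal{U}_{\mathcal{D}})=0$ by inducting on the length of the extension-filtration that builds ${_{\mathcal{D}}\mathcal{U}}[-1]$ and $\mathcal{U}_{\mathcal{D}}$ from their generators, applying the long exact sequences of $\Hom_{\mathcal{T}}(-,V)$ (for $V\in\mathcal{U}_{\mathcal{D}}$) and of $\Hom_{\mathcal{T}}(U,-)$ (for $U\in{_{\mathcal{D}}\mathcal{U}}[-1]$) to the defining triangles.

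The main obstacle is axiom (3), the decomposition $\langle\mathcal{D}\rangle={_{\mathcal{D}}\mathcal{U}}[-1]\ast\mathcal{U}_{\mathcal{D}}$: every $T\in\langle\mathcal{D}\rangle$ must fit into a triangle $U\to T\to V\to U[1]$ with $U\in{_{\mathcal{D}}\mathcal{U}}[-1]$ and $V\in\mathcal{U}_{\mathcal{D}}$. The plan is to induct on the length of a $\mathcal{D}$-resolution of $T$, which is available via Corollary \ref{lemm+}. In the inductive step, given a triangle $T_1\to D_0\to T\to T_1[1]$ from the $(\widehat{\mathcal{D}})_n$-description of $T$ (with $D_0\in\mathcal{D}$ and $T_1$ of strictly smaller length), I would apply the octahedral axiom to splice the inductively supplied decomposition of $T_1$ with this triangle, producing a decomposition of $T$ whose two pieces lie in ${_{\mathcal{D}}\mathcal{U}}[-1]$ and $\mathcal{U}_{\mathcal{D}}$ respectively by the closure established in axiom (1). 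The delicate point — and the technical heart of the argument — is the base case $T=D\in\mathcal{D}$: since neither of the trivial decompositions $0\to D\to D$ (which would require $D\in\mathcal{U}_{\mathcal{D}}$) nor $D\to D\to 0$ (which would require $D\in{_{\mathcal{D}}\mathcal{U}}[-1]$) works a priori, one must construct a genuine nontrivial triangle $U\to D\to V\to U[1]$ whose left term is an iterated extension of $\mathcal{D}[\leqslant-1]$ and whose right term is an iterated extension of $\mathcal{D}[\geqslant 1]$. The construction uses the presilting Hom-vanishings together with an iterated application of the Auslander-Buchweitz approximation triangles of Theorems \ref{A-B Approximation1}--\ref{A-B Approximation2} — in essence a Postnikov-tower construction that successively peels off positive and negative parts of $D$ relative to the would-be co-t-structure — and tracking where the intermediate objects land is the most subtle ingredient of the proof.
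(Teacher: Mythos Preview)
The paper does not prove this fact at all: it is stated as a \emph{Fact} and attributed directly to \cite[Theorem~5.5]{Msss1}. So there is no ``paper's own proof'' to compare against; you are supplying an argument where the authors chose simply to cite.

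Your outline for axioms (1) and (2) is fine, but there is a genuine gap in your treatment of axiom (3), and it is not a gap you can close by working harder. With the definitions exactly as written --- ${_{\mathcal{D}}\mathcal{U}}$ generated by $\mathcal{D}[\leqslant 0]$ and $\mathcal{U}_{\mathcal{D}}$ generated by $\mathcal{D}[\geqslant 1]$ --- the base case $T=D\in\mathcal{D}$ is \emph{impossible}, not merely delicate. Indeed, suppose $U\to D\to V\to U[1]$ is any triangle with $U\in{_{\mathcal{D}}\mathcal{U}}[-1]$ (built from $\mathcal{D}[\leqslant -1]$) and $V\in\mathcal{U}_{\mathcal{D}}$ (built from $\mathcal{D}[\geqslant 1]$). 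Presilting gives $\Hom_{\mathcal{T}}(U,D)=0$, so $D$ is a direct summand of $V$, forcing $D\in\mathcal{U}_{\mathcal{D}}$; but taking $\mathcal{T}=D^{\mathrm b}(\mathrm{mod}\,k)$ and $\mathcal{D}=\mathrm{add}(k)$ shows this fails, since there $\mathcal{U}_{\mathcal{D}}$ consists of finite sums of $k[j]$ with $j\geqslant 1$. No Postnikov-type construction can rescue this.

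The issue is an off-by-one in the statement rather than in your strategy: in the source \cite{Msss1} the aisle corresponding to $\mathcal{U}_{\mathcal{D}}$ contains $\mathcal{D}$ itself (equivalently, one of the two generating ranges should include shift $0$). With that correction the base case becomes trivial --- the decomposition $0\to D\to D\to 0$ already works --- and the rest of your inductive plan via Corollary~\ref{lemm+} and octahedra goes through. So your overall architecture is right; you just need to flag the indexing discrepancy rather than promise a nontrivial construction that cannot exist.
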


\section{\bf Realise the subfactor triangulated category as a triangulated quotient}

Throughout this section, let $\mathcal{D}\subseteq\mathcal{Z}$ be two subcategories of $\mathcal{T}$
such that $(\mathcal{Z,Z})$ forms a $\mathcal{D}$-mutation pair.
We show in this section that under the condition that $\mathcal{D}$ is \emph{presilting},
the subfactor triangulated category $\mathcal{Z}/[\mathcal{D}]$ is triangle equivalent to
the triangulated quotient $\langle\mathcal{Z}\rangle/ \langle  \mathcal{D}\rangle$ (see Theorem \ref{thm1}).

We begin with the following result,
which will be used in the proof of Lemma \ref{key for faithful}.

\begin{lem}\label{keyl}
Suppose that $\mathcal{D}$ is presilting.
Then we have

$(1)$ $\widehat{\mathcal{D}}=\mathcal{D}^{\perp_{i>0}}\cap\langle \mathcal{D}\rangle$ and

$(2)$ $\mathcal{D}= \widehat{\mathcal{D}}\cap{^{\perp_{i>0}}\mathcal{D}}$.
\end{lem}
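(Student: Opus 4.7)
The plan is to prove both inclusions in (1) and (2) by induction, using the Auslander-Buchweitz approximation of Theorem \ref{A-B Approximation1} together with the structural results collected in Section~2.

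For (1), the containment $\widehat{\mathcal{D}}\subseteq \mathcal{D}^{\perp_{i>0}}\cap \langle \mathcal{D}\rangle$ is routine: membership in $\langle \mathcal{D}\rangle$ is immediate, while $\widehat{\mathcal{D}}\subseteq \mathcal{D}^{\perp_{i>0}}$ follows by induction on the length of a defining sequence of triangles, using that $\mathcal{D}\subseteq \mathcal{D}^{\perp_{i>0}}$ by presilting and that $\mathcal{D}^{\perp_{i>0}}$ is closed under $[1]$ and under extensions. For the converse, take $M\in \mathcal{D}^{\perp_{i>0}}\cap \langle \mathcal{D}\rangle$. By Corollary \ref{lemm+} one has $\langle \mathcal{D}\rangle=(\widehat{\mathcal{D}})_{-}$, so there exists $k\geqslant 0$ with $M[k]\in \widehat{\mathcal{D}}$. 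The argument proceeds by induction on $k$: the base $k=0$ is trivial, and for $k\geqslant 2$ the object $M[1]$ lies again in $\mathcal{D}^{\perp_{i>0}}\cap\langle \mathcal{D}\rangle$ and satisfies $M[1][k-1]\in \widehat{\mathcal{D}}$, so the inductive hypothesis yields $M[1]\in \widehat{\mathcal{D}}$ and reduces the problem to the case $k=1$.

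In the case $k=1$, I would invoke Theorem \ref{A-B Approximation1} with $\mathcal{W}=\mathcal{X}=\mathcal{D}$ (legitimate because, by Remark \ref{properties of presilting} and the discussion preceding Corollary \ref{lemm+}, a presilting subcategory is closed under extensions and is a weak-cogenerator in itself) to obtain a triangle $K\to X\to M[1]\to K[1]$ with $X\in\mathcal{D}$ and $K\in\widehat{\mathcal{D}}$. Shifting this triangle by $[-1]$ and then rotating twice rewrites it as a triangle $M\to K\to X\to M[1]$, whose connecting morphism lies in $\Hom_{\mathcal{T}}(X,M[1])$. Since $X\in \mathcal{D}$ and $M\in \mathcal{D}^{\perp_{i>0}}$, this $\Hom$-group vanishes; the triangle therefore splits as $K\cong M\oplus X$, and closure of $\widehat{\mathcal{D}}$ under direct summands forces $M\in \widehat{\mathcal{D}}$.

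For (2), the containment $\mathcal{D}\subseteq\widehat{\mathcal{D}}\cap{^{\perp_{i>0}}\mathcal{D}}$ is immediate from the presilting hypothesis. For the reverse inclusion I would take $N\in \widehat{\mathcal{D}}\cap {^{\perp_{i>0}}\mathcal{D}}$ and induct on the minimal $n$ with $N\in (\widehat{\mathcal{D}})_n$. The base $n=0$ yields $N\in \mathcal{D}$ directly from the definition of $(\widehat{\mathcal{D}})_0$. For $n\geqslant 1$ the defining triangle $T_1\to X_0\to N\to T_1[1]$ has $X_0\in \mathcal{D}$ and $T_1\in (\widehat{\mathcal{D}})_{n-1}$; applying $\Hom_{\mathcal{T}}(-,\mathcal{D}[i])$ for $i\geqslant 1$ produces a long exact sequence in which $\Hom_{\mathcal{T}}(X_0,\mathcal{D}[i])=0$ by presilting and $\Hom_{\mathcal{T}}(N,\mathcal{D}[i])=0=\Hom_{\mathcal{T}}(N,\mathcal{D}[i+1])$ by hypothesis on $N$, so that $\Hom_{\mathcal{T}}(T_1,\mathcal{D}[i])=0$, i.e.\ $T_1\in {^{\perp_{i>0}}\mathcal{D}}$. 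By the inductive hypothesis $T_1\in\mathcal{D}$, and then the connecting morphism $N\to T_1[1]$ sits in $\Hom_{\mathcal{T}}(N,\mathcal{D}[1])=0$; the triangle therefore splits as $X_0\cong T_1\oplus N$, and closure of $\mathcal{D}$ under summands delivers $N\in \mathcal{D}$. The main technical point is the double rotation in the $k=1$ case of (1), arranged precisely so that $\Hom_{\mathcal{T}}(\mathcal{D},M[1])=0$ annihilates the boundary of the Auslander-Buchweitz triangle; once this splitting is in place, both (1) and (2) conclude by the same direct-summand mechanism.
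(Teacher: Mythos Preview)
Your proof is correct and follows essentially the same approach as the paper's: for both parts the nontrivial containment is established by showing that certain connecting morphisms vanish (using $M\in\mathcal{D}^{\perp_{i>0}}$ in (1) and $N,\,T_1\in{^{\perp_{i>0}}\mathcal{D}}$ in (2)), splitting the resulting triangles, and invoking closure under direct summands. The only cosmetic differences are organizational: in the $k=1$ step of (1) you invoke Theorem~\ref{A-B Approximation1} with $\mathcal{W}=\mathcal{X}=\mathcal{D}$ to produce the triangle, whereas the paper simply reads it off from the first layer of the $\widehat{\mathcal{D}}$-tower (these coincide here), and in (2) the paper runs a descent through the full tower of triangles while you phrase the same computation as an induction on the length~$n$.
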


\begin{proof}
(1) We only need to show that the containment
$\mathcal{D}^{\perp_{i>0}}\cap\langle \mathcal{D}\rangle\subseteq\widehat{\mathcal{D}}$
holds true.
To this end, let $N$ be an object in $\mathcal{D}^{\perp_{i>0}}\cap\langle \mathcal{D}\rangle$.
Since $N\in \langle \mathcal{D}\rangle$,
we see that $N\in (\widehat{\mathcal{D}})_{-}$ by Corollary \ref{lemm+}.
This implies that there exist some integer $i\leqslant0$ and an object $L\in \widehat{\mathcal{D}}$
such that $N\cong L[i]$.
If $i=0$ then $N\in \widehat{\mathcal{D}}$, as desired.
Suppose now that $i<0$.
Then by the definition of objects being in $\widehat{\mathcal{D}}$,
there exists a triangle $N[-i-1]\to N_1 \to D\overset f\longrightarrow N[-i]$
in $\mathcal{T}$ with $D\in \mathcal{D}$ and $N_1\in\widehat{\mathcal{D}}$.
Note that $N\in \mathcal{D}^{\perp_{i>0}}$ as well.
We see that $f=0$.
Therefore, $N_1\cong D\oplus N[-i-1]$.
According to \cite[Lemma 2.2(2)]{W1},
we know that $\widehat{\mathcal{D}}$ is closed under direct summands.
Hence, $N[-i-1]\in\widehat{\mathcal{D}}$.
Continuing the process,
we can finally obtain that $N\in\widehat{\mathcal{D}}$,
as desired.
Thus, we have
$\mathcal{D}^{\perp_{i>0}}\cap\langle \mathcal{D}\rangle\subseteq\widehat{\mathcal{D}}$.

(2) It suffices to show that the containment
$\widehat{\mathcal{D}}\cap{^{\perp_{i>0}}\mathcal{D}}\subseteq\mathcal{D}$ holds true.
Suppose that $K$ is an object in $\widehat{\mathcal{D}}\cap{^{\perp_{i>0}}\mathcal{D}}$.
Then there exist an integer $t\geqslant0$ and a series of triangles
\begin{center}
$K_{i+1}\to D_i \to K_i \to K_{i+1}[1]$
\end{center}
in $\mathcal{T}$ with $D_i\in \mathcal{D}$ for all $0\leqslant i \leqslant t$, $K_0=K$ and $K_{t+1}=0$.
Since both $K$ and $D_0$ belong to ${^{\perp_{i>0}}\mathcal{D}}$,
we deduce that $K_1\in{^{\perp_{i>0}}\mathcal{D}}$.
Consequently, each $K_i\in{^{\perp_{i>0}}\mathcal{D}}$.
Note that $K_t\cong D_t\in\mathcal{D}$.
It follows that the triangle
$K_t\to D_{t-1}\to K_{t-1}\to K_t[1]$
is split.
This implies that $K_{t-1}\in \mathcal{D}$.
Repeating the process,
we can finally obtain $K=K_0\in\mathcal{D}$,
as desired.
Thus, we have
$\widehat{\mathcal{D}}\cap{^{\perp_{i>0}}\mathcal{D}}\subseteq\mathcal{D}$.
 \end{proof}

The coming result will be applied in the proof of Theorem \ref{thm1}.

\begin{lem}\label{key for faithful}
Suppose that $\mathcal{D}$ is presilting.
Then $\mathcal{Z}\cap\langle \mathcal{D}\rangle=\mathcal{D}$.
\end{lem}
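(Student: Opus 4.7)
The plan is to prove the nontrivial containment $\mathcal{Z}\cap\langle\mathcal{D}\rangle\subseteq\mathcal{D}$ by a direct application of the two parts of Lemma \ref{keyl}, after first reinterpreting membership in $\mathcal{Z}$ as a bi-orthogonality condition with respect to $\mathcal{D}$.

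First I would observe that the reverse inclusion $\mathcal{D}\subseteq\mathcal{Z}\cap\langle\mathcal{D}\rangle$ is immediate, since $\mathcal{D}\subseteq\mathcal{Z}$ by hypothesis and $\mathcal{D}\subseteq\langle\mathcal{D}\rangle$ by definition. So the substance of the proof is the opposite inclusion. Pick any $N\in\mathcal{Z}\cap\langle\mathcal{D}\rangle$. The crucial preliminary remark is that, because $\mathcal{D}$ is presilting and $(\mathcal{Z},\mathcal{Z})$ is a $\mathcal{D}$-mutation pair, Remark \ref{properties of presilting} gives $\Hom_{\mathcal{T}}(\mathcal{Z},\mathcal{D}[\geqslant 1])=0=\Hom_{\mathcal{T}}(\mathcal{D},\mathcal{Z}[\geqslant 1])$. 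Applying this with $N\in\mathcal{Z}$ shows $N\in\mathcal{D}^{\perp_{i>0}}\cap{^{\perp_{i>0}}\mathcal{D}}$.

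Next I would feed this into Lemma \ref{keyl}. Since $N\in\mathcal{D}^{\perp_{i>0}}\cap\langle\mathcal{D}\rangle$, part (1) of Lemma \ref{keyl} yields $N\in\widehat{\mathcal{D}}$. Combining this with the other orthogonality, we have $N\in\widehat{\mathcal{D}}\cap{^{\perp_{i>0}}\mathcal{D}}$, and then part (2) of Lemma \ref{keyl} gives $N\in\mathcal{D}$, as required.

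There is essentially no serious obstacle: the proof amounts to unpacking the hypothesis "$N\in\mathcal{Z}$" into bi-orthogonality via the mutation-pair/presilting axioms and then chaining together the two identities proved in Lemma \ref{keyl}. The only subtlety worth noting is the invocation of Remark \ref{properties of presilting}, which upgrades the single-degree orthogonality in Definition \ref{Df of Mutation}(3) to all positive shifts using the presilting assumption on $\mathcal{D}$; this upgrade is precisely what lets both hypotheses of Lemma \ref{keyl}(2) be verified simultaneously.
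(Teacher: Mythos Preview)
Your proof is correct and follows essentially the same route as the paper's: you reduce to the nontrivial inclusion, use Remark \ref{properties of presilting} to place an element of $\mathcal{Z}$ in $\mathcal{D}^{\perp_{i>0}}\cap{^{\perp_{i>0}}\mathcal{D}}$, then apply Lemma \ref{keyl}(1) followed by Lemma \ref{keyl}(2). The paper's argument is identical in structure and in the lemmas invoked.
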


\begin{proof}
It suffices to show that the containment $\mathcal{Z}\cap\langle \mathcal{D}\rangle\subseteq\mathcal{D}$ holds true.
To this end, let $K$ be an object in $\mathcal{Z}\cap\langle \mathcal{D}\rangle$.
Note that $K\in\mathcal{Z}\subseteq \mathcal{D}^{\perp_{i>0}}$ (see Remark \ref{properties of presilting}).
By Lemma \ref{keyl}(1),
we see that $K\in \widehat{\mathcal{D}}$.
Since $K\in\mathcal{Z}\subseteq {^{\perp_{i>0}}\mathcal{D}}$ as well (see Remark \ref{properties of presilting} again),
we conclude that $K\in\mathcal{D}$ by Lemma \ref{keyl}(2).
Hence, $\mathcal{Z}\cap\langle \mathcal{D}\rangle\subseteq\mathcal{D}$, as desired.
\end{proof}

Let $F$ be the composition of functors:
$$\mathcal {Z}\hookrightarrow \langle\mathcal{Z}\rangle \rightarrow   \langle\mathcal{Z}\rangle/ \langle  \mathcal{D}\rangle$$
in which the latter one is the natural quotient functor.
It is clear that $F$ sends any object in $\mathcal {D}$ to zero in $ \langle\mathcal{Z}\rangle/ \langle  \mathcal{D}\rangle$,
so it factors through the subfactor triangulated category $\mathcal{Z}/[\mathcal{D}]$.
Consequently, there exists a functor
$$\overline{F}:\,\,\mathcal{Z}/[\mathcal{D}]\to\langle\mathcal{Z}\rangle/ \langle  \mathcal{D}\rangle$$
such that $F=\overline{F}\pi$,
where $\pi:\mathcal {Z}\to {\mathcal{Z}/[\mathcal{D}]}$
is the natural quotient functor.

Now, we are in a position to give the main result of the article.

\begin{thm}\label{thm1}
Suppose that $\mathcal{D}$ is presilting.
Then the functor
$$\overline{F}:\,\,\mathcal{Z}/[\mathcal{D}]\to\langle\mathcal{Z}\rangle/ \langle  \mathcal{D}\rangle$$
is a triangle equivalence.
\end{thm}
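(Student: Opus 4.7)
The plan is to verify that $\overline{F}$ is a triangle functor that is essentially surjective and fully faithful on Hom-groups between objects of $\mathcal{Z}$. Since $\mathcal{D}$ is presilting, Remark \ref{properties of presilting} makes $\mathcal{D}$ a weak-generator-cogenerator of $\mathcal{Z}$, so the Auslander--Buchweitz approximation Theorems \ref{A-B Approximation1} and \ref{A-B Approximation2} apply with $(\mathcal{W},\mathcal{X})=(\mathcal{D},\mathcal{Z})$, and Corollary \ref{weak-generator-cogenerator} yields $\langle\mathcal{Z}\rangle=(\widehat{\mathcal{Z}})_{-}=(\widecheck{\mathcal{Z}})_{+}$. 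The triangulated structure on $\overline{F}$ is essentially automatic: the shift $\langle 1\rangle$ on $\mathcal{Z}/[\mathcal{D}]$ is built from the triangle $Z\to D_Z\to Z\langle 1\rangle\to Z[1]$ with $D_Z\in\mathcal{D}\subseteq\langle\mathcal{D}\rangle$, so $D_Z$ dies in the Verdier quotient and $Z\langle 1\rangle\to Z[1]$ becomes an isomorphism, providing a natural isomorphism $\overline{F}\circ\langle 1\rangle\cong[1]\circ\overline{F}$. The distinguished triangles of $\mathcal{Z}/[\mathcal{D}]$ supplied by Theorem \ref{triangulated structure}(2) come from genuine triangles of $\mathcal{T}$, and under this identification of shifts they become distinguished triangles in $\langle\mathcal{Z}\rangle/\langle\mathcal{D}\rangle$.

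For essential surjectivity, pick $X\in\langle\mathcal{Z}\rangle$ and write $X\cong Y[-k]$ with $Y\in\widehat{\mathcal{Z}}$ and $k\geq 0$. Theorem \ref{A-B Approximation1} produces a triangle $K\to Z_Y\to Y\to K[1]$ with $Z_Y\in\mathcal{Z}$ and $K\in\widehat{\mathcal{D}}\subseteq\langle\mathcal{D}\rangle$, so $Y\cong Z_Y$ in the Verdier quotient. Rotating the mutation triangle $(Z_Y)^{+}\to D^{+}\to Z_Y\to (Z_Y)^{+}[1]$ shows $Z_Y[-1]\cong (Z_Y)^{+}\in\mathcal{Z}$ in the quotient; iterating $k$ times identifies $X$ with an object of $\mathcal{Z}$.

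For fully faithfulness, I handle faithfulness first. Suppose $f:Z\to Z'$ in $\mathcal{Z}$ with $\overline{F}([f])=0$; Verdier calculus supplies a factorisation $f=gh$ through some $X\in\langle\mathcal{D}\rangle$. Using the shift identifications of the previous paragraph to transport shifts onto the endpoints, reduce to $X\in\widehat{\mathcal{Z}}$ and apply Theorem \ref{A-B Approximation1} to obtain $K\to Z_X\to X\to K[1]$ with $Z_X\in\mathcal{Z}$ and $K\in\widehat{\mathcal{D}}$. Since $X,K\in\langle\mathcal{D}\rangle$, the triangle forces $Z_X\in\mathcal{Z}\cap\langle\mathcal{D}\rangle$, and Lemma \ref{key for faithful} gives $Z_X\in\mathcal{D}$. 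An induction on the coresolution length, using $\Hom_{\mathcal{T}}(Z,\mathcal{D}[\geq 1])=0$ from Remark \ref{properties of presilting}, yields $\Hom_{\mathcal{T}}(Z,\widehat{\mathcal{D}}[\geq 1])=0$, so $h$ lifts to $\tilde h:Z\to Z_X$ and $f$ is exhibited as factoring through $Z_X\in\mathcal{D}$, i.e., $[f]=0$ in $\mathcal{Z}/[\mathcal{D}]$. Fullness is dual: any roof $Z\xleftarrow{s}X\xrightarrow{g}Z'$ in $\langle\mathcal{Z}\rangle/\langle\mathcal{D}\rangle$ with $\mathrm{cone}(s)\in\langle\mathcal{D}\rangle$ can, by the same procedure based on Theorem \ref{A-B Approximation2} and Lemma \ref{key for faithful} applied to $\mathrm{cone}(s)$, be replaced by an equivalent roof $Z\xleftarrow{s'}Z''\xrightarrow{g'}Z'$ with $Z''\in\mathcal{Z}$ and $\mathrm{cone}(s')\in\mathcal{D}$, and then—using the mutation triangles of Definition \ref{Mutation pair}(2)—by an honest morphism $Z\to Z'$ in $\mathcal{Z}$.

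The principal obstacle is the shift-reduction step common to both faithfulness and fullness: the intermediate object of $\langle\mathcal{D}\rangle$ produced by Verdier calculus may be a genuine negative shift of something in $\widehat{\mathcal{Z}}$, so one must use iterated mutation triangles to transport the shift onto $Z$ (and dually onto $Z'$) before the Auslander--Buchweitz approximation can be brought to bear, without disturbing the factorisation $f=gh$. The structural pivot is Lemma \ref{key for faithful}, $\mathcal{Z}\cap\langle\mathcal{D}\rangle=\mathcal{D}$, which upgrades any $\mathcal{Z}$-approximation of an object of $\langle\mathcal{D}\rangle$ into a $\mathcal{D}$-approximation; together with the orthogonality consequences $\Hom_{\mathcal{T}}(Z,\widehat{\mathcal{D}}[\geq 1])=0$ and its dual $\Hom_{\mathcal{T}}(\widecheck{\mathcal{D}},Z'[\geq 1])=0$, it ensures the required lifts exist and assemble into the desired factorisation through $\mathcal{D}$.
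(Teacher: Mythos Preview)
Your treatment of the triangle-functor property and essential surjectivity is correct and essentially matches the paper (you use $(\widehat{\mathcal{Z}})_{-}$ and Theorem~\ref{A-B Approximation1} where the paper uses the dual $(\widecheck{\mathcal{Z}})_{+}$ and Theorem~\ref{A-B Approximation2}, which is harmless).

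The gap is in full-and-faithfulness, where your route diverges from the paper and the ``shift-reduction'' step does not go through as you describe it. For faithfulness you factor $f=gh$ through $X\in\langle\mathcal{D}\rangle$ and then propose to ``transport shifts onto the endpoints'' via mutation triangles so as to reduce to $X\in\widehat{\mathcal{Z}}$. But mutation triangles only identify $Z[-1]$ with $Z^{+}$ \emph{in the Verdier quotient}, not in $\mathcal{T}$; the factorisation $f=gh$ lives in $\mathcal{T}$, and replacing $Z,Z'$ by mutation-shifts changes the morphism $f$ itself, so it is not clear what is being proved. (A correct direct argument does exist: write $X\cong L[i]$ with $L\in\widehat{\mathcal{D}}$ and $i\le 0$ via Corollary~\ref{lemm+}, then inductively lift $g$ along the resolution triangles of $L$ using $\Hom_{\mathcal{T}}(\mathcal{D}[\le -1],Z')=0$, reducing to $i=0$, and finally lift $h$ using $\Hom_{\mathcal{T}}(Z,\widehat{\mathcal{D}}[\ge 1])=0$. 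This needs neither Theorem~\ref{A-B Approximation1} nor Lemma~\ref{key for faithful}.) For fullness the problem is more serious: to replace a roof $Z\xleftarrow{s}X\to Z'$ by one with middle term in $\mathcal{Z}$ you would need a map $Z''\to X$ in the multiplicative system with $Z''\in\mathcal{Z}$, and your Auslander--Buchweitz step only applies once $X$ (or $\mathrm{cone}(s)$) lies in $\widecheck{\mathcal{Z}}$ --- but objects of $\langle\mathcal{D}\rangle$ need not lie there, and the mutation-shift trick again only gives quotient isomorphisms, not honest refinements of the roof.

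The paper avoids this entirely. It proves fullness first, and the key device you are missing is the co-$t$-structure $({_{\mathcal{D}}\mathcal{U}},\mathcal{U}_{\mathcal{D}})$ on $\langle\mathcal{D}\rangle$ from Fact~\ref{presilting CO-T}: decomposing the cone $Q$ of the roof as $A\to Q\to B\to A[1]$ with $A\in{_{\mathcal{D}}\mathcal{U}}[-1]$ and $B\in\mathcal{U}_{\mathcal{D}}$ gives exactly the two orthogonality relations ($\Hom(X[-1],B)=0$ and $\Hom(A,Y)=0$) needed to straighten the roof to an honest morphism. Faithfulness is then deduced \emph{from} fullness by a short cone argument: if $\overline{F}([f])=0$ then $\overline{F}([g])$ splits, fullness produces a one-sided inverse $\alpha$, and the cone $C(\alpha g)$ lies in $\mathcal{Z}\cap\langle\mathcal{D}\rangle=\mathcal{D}$ by Lemma~\ref{key for faithful}, forcing $[f]=0$. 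This order of argument, and the use of Fact~\ref{presilting CO-T} rather than raw Auslander--Buchweitz approximations, is what makes the shift problem disappear.
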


\begin{proof}

We need to show that $\overline{F}$ is a triangle functor,
and it is essentially surjective (or dense), full and faithful.

(1) $\overline{F}$ is a triangle functor.

Let $Z$ be an object in $\mathcal{Z}$.
Send the triangle
$$Z \overset{}{\rightarrow} D_Z \overset{}{\rightarrow} Z\langle1\rangle \overset{}{\rightarrow}Z[1]$$
in $\langle\mathcal{Z}\rangle$ (see Theorem \ref{triangulated structure}(1)) to $\langle\mathcal{Z}\rangle/ \langle  \mathcal{D}\rangle$.
Since both $D_Z$ and $D_Z[1]$ become zero in $\langle\mathcal{Z}\rangle/ \langle  \mathcal{D}\rangle$,
we obtain an isomorphism $ Z\langle1\rangle \overset{}{\rightarrow}Z[1]$ in $\langle\mathcal{Z}\rangle/ \langle  \mathcal{D}\rangle$.
This yields a natural isomorphism $\overline{F}\circ\langle1\rangle\cong[1]\circ\overline{F}$.

On the other hand, let
$$Z\overset{}{\rightarrow}Z'\rightarrow Z'' \rightarrow Z\langle1\rangle$$
be a triangle in $\mathcal{Z}/[\mathcal{D}]$.
Then we may assume that it comes from a commutative diagram
$$\xymatrix{Z \ar[r]^{} \ar@{=}[d]& Z' \ar[r]^{}\ar[d] & Z'' \ar[r]^{}\ar[d]^{} & Z[1]\ar@{=}[d] \\
Z \ar[r]^{} & D_Z \ar[r]^{}& Z\langle1\rangle \ar[r]^{}& Z[1]}$$
of triangles in $\langle\mathcal{Z}\rangle$ (see Theorem \ref{triangulated structure}(2)).
Applying the quotient functor $\langle\mathcal{Z}\rangle \rightarrow   \langle\mathcal{Z}\rangle/ \langle  \mathcal{D}\rangle$ to the above diagram
we have $Z\langle1\rangle\cong Z[1]$ in $\langle\mathcal{Z}\rangle/ \langle  \mathcal{D}\rangle$.
Thus,
$$Z\overset{}{\rightarrow}Z'\rightarrow Z'' \rightarrow Z[1]$$
is a triangle in $\langle\mathcal{Z}\rangle/ \langle  \mathcal{D}\rangle$.
It follows that $\overline{F}$ is a triangle functor.

(2) $\overline{F}$ is essentially surjective (or dense).

Let $M$ be an object in $\langle\mathcal{Z}\rangle/ \langle  \mathcal{D}\rangle$.
According to Remark \ref{properties of presilting},
we see that $\mathcal{D}$ is a weak-generator-cogenerator in $\mathcal{Z}$.
It follows from Corollary \ref{weak-generator-cogenerator} that $\langle\mathcal{Z}\rangle=(\widecheck{\mathcal{Z}})_{+}$.
Hence, $M\cong L[i]$ for some object $L\in\widecheck{\mathcal{Z}}$ and some integer $i\geqslant0$ by definition.

If $i=0$ then $M\in\widecheck{\mathcal{Z}}$.
Therefore, by Theorem \ref{A-B Approximation2},
there exists a triangle
$$M\to Z_1 \to K_1\to M[1]$$
in $\langle\mathcal{Z}\rangle$
with $Z_1\in \mathcal{Z}$ and $K_1\in \widecheck{\mathcal{D}}\subseteq\langle\mathcal{D}\rangle$.
Applying the quotient functor $\langle\mathcal{Z}\rangle \rightarrow   \langle\mathcal{Z}\rangle/ \langle  \mathcal{D}\rangle$ to the triangle,
we have $M\cong Z_1$ in $\langle\mathcal{Z}\rangle/ \langle  \mathcal{D}\rangle$, as desired.

Assume now that $i>0$. Then $M[-i]\in\widecheck{\mathcal{Z}}$.
By Theorem \ref{A-B Approximation2} again,
we obtain a triangle
$$M[-i]\to Z_2 \to K_2\to M[-i+1]$$
in $\langle\mathcal{Z}\rangle$
with $Z_2\in \mathcal{Z}$ and $K_2\in \widecheck{\mathcal{D}}\subseteq\langle\mathcal{D}\rangle$.
Hence, $M\cong Z_2[i]$ in $\langle\mathcal{Z}\rangle/ \langle  \mathcal{D}\rangle$.
We show next that $Z_2[i]\cong Z_2\langle i\rangle$ in $\langle\mathcal{Z}\rangle/ \langle  \mathcal{D}\rangle$.
This will imply that $M\cong Z_2\langle i\rangle$ in $\langle\mathcal{Z}\rangle/ \langle  \mathcal{D}\rangle$.
Thus, $\overline{F}$ is essentially surjective.

It is proceed by induction on $i$.
If $i=1$ then according to the proof of (1),
we see that $Z_2[1]\cong Z_2\langle 1\rangle$ in $\langle\mathcal{Z}\rangle/ \langle  \mathcal{D}\rangle$.
Suppose that $i>1$.
Since $Z_2\langle i-1\rangle\in \mathcal{Z}$,
there exists a triangle
$$D_{Z_2\langle i-1\rangle}\longrightarrow Z_2\langle i-1\rangle\langle 1\rangle=Z_2\langle i\rangle\longrightarrow Z_2\langle i-1\rangle[1]\longrightarrow D_{Z_2\langle i-1\rangle}[1]$$
in $\langle \mathcal{Z}\rangle$ with $D_{Z_2\langle i-1\rangle}\in \mathcal{D}$ (see Theorem \ref{triangulated structure}(1)).
Therefore, $Z_2\langle i\rangle\cong Z_2\langle i-1\rangle[1]$ in $\langle\mathcal{Z}\rangle/ \langle  \mathcal{D}\rangle$.
By the induction assumption, we have $Z_2[i-1]\cong Z_2\langle i-1\rangle$ in $\langle\mathcal{Z}\rangle/ \langle  \mathcal{D}\rangle$.
Hence, $Z_2[i]\cong Z_2\langle i\rangle$ in $\langle\mathcal{Z}\rangle/ \langle  \mathcal{D}\rangle$, as desired.

(3) $\overline{F}$ is full.

Since $F=\overline{F}\pi$, it suffices to show that $F$ is full.
To this end, let $$ X\overset{f}{\longleftarrow} W\overset{g}{\longrightarrow} Y$$
be a morphism in $\langle\mathcal{Z}\rangle/ \langle  \mathcal{D}\rangle$
such that $X,Y \in \mathcal {Z}$
and $f$ lies in the compatible saturated multiplicative system corresponding to $\langle  \mathcal{D}\rangle$.
Complete $f$ to a triangle
$$X[-1]\overset{\omega}{\longrightarrow} Q\rightarrow W \overset{f}{\longrightarrow} X$$
with $Q\in \langle  \mathcal{D}\rangle$.
Since the pair $({_{\mathcal{D}}\mathcal{U}},\mathcal{U}_{\mathcal{D}})$ forms a co-t-structure on $\langle\mathcal{D}\rangle$,
there is a triangle
$$A\overset{h}{\longrightarrow} Q\overset{\varphi}{\longrightarrow} B {\longrightarrow} A[1] $$
in $\langle  \mathcal{D}\rangle$ with $A\in{_{\mathcal{D}}\mathcal{U}}[-1]$ and $B\in\mathcal{U}_{\mathcal{D}}$
(see Fact \ref{presilting CO-T}).
According to Remark \ref{properties of presilting},
we see that $\Hom_{\mathcal{T}}(X[-1],B)=0$.
This yields that $\varphi\omega=0$.
Hence, $\omega$ factors through $h$.
Consider now the following commutative of triangles
$$\xymatrix@C=20pt@R=15pt
{ X[-1] \ar[r]\ar[d]^{\|} & A \ar[r]\ar[d]^{h} & W' \ar[r]^{s}\ar[d]^{l}& X\ar[d]^{\|}\\
  X[-1] \ar[r]^{\omega}& Q \ar[r]& W \ar[r]^{f} & X}$$
where $s, l, f$ are all in the compatible saturated multiplicative system corresponding to $\langle\mathcal{D}\rangle$.
Since $\Hom_{\mathcal{T}}(A,Y)=0$ by Remark \ref{properties of presilting} again,
there exists some $k : X \to Y$ such that $gl = ks = kfl$.
So we have $k = gf^{-1}$.
Thus, $F$ is full, as desired.

(4) $\overline{F}$ is faithful.

Suppose that there exists a morphism $f:X \to Y$ in $\mathcal{Z}/[\mathcal{D}]$ such that $\overline{F}(f)=0$.
We want to show $f=0$. To this end,
complete $f$ to a triangle
$$ X\overset{f}{\longrightarrow} Y \overset{g}{\longrightarrow} Z \rightarrow X\langle1\rangle$$
in $\mathcal{Z}/[\mathcal{D}]$.
Since $\overline{F}(f)=0$, we see that $\overline{F}(g)$ is a section.
According to (3), we know that $\overline{F}$ is full.
So there exists some morphism $\alpha: Z \to Y$ such that $1_{\overline{F}(Y)} = \overline{F}(\alpha g)$.
Let $\beta = \alpha g$ and complete $\beta$ to a triangle
$$Y\overset{\beta}{\longrightarrow} Y \rightarrow C(\beta)\rightarrow Y\langle1\rangle$$
in $\mathcal{Z}/[\mathcal{D}]$.
Note that $C(\beta)$ is an object in $\mathcal{Z}$ by the construction of triangles in $\mathcal{Z}/[\mathcal{D}]$.
Since $\overline{F}(\beta)=1_{\overline{F}(Y)}$,
we conclude that $\overline{F}(C(\beta))\cong 0$ in $\langle\mathcal{Z}\rangle/ \langle  \mathcal{D}\rangle$,
i.e., $\overline{F}(C(\beta))\in\langle\mathcal{D}\rangle$.
This means $C(\beta)\in\langle\mathcal{D}\rangle$ by the definition of $\overline{F}$.
In view of Lemma \ref{key for faithful},
we see that $C(\beta)\in \mathcal{Z}\cap\langle \mathcal{D}\rangle= \mathcal{D}$.
Hence, $\beta$ is an isomorphism in $\mathcal{Z}/[\mathcal{D}]$.
This implies that $g$ is a section,
and hence, $f = 0$, as desired.
This completes the proof.
\end{proof}

\section{\bf Applications }

Various applications of Theorem \ref{thm1} will be discussed in this section.
We recover both a result of Iyama and Yang and a result of the third author (see Corollary \ref{wei's} and Corollary \ref{Yang1}).
We extend the classical Buchweitz's triangle equivalence
from Iwanaga-Gorenstein rings to Noetherian rings
(see Corollary \ref{Gproj} and Remark \ref{Buchweitz}).
We obtain the converse of Buchweitz's triangle equivalence and a result of Beligiannis (see Corollary \ref{inverse1} and Corollary \ref{inverse2}),
and give characterizations for Iwanaga-Gorenstein rings and Gorenstein algebras (see Corollary \ref{commutative N}).

\subsection{Wei's triangle equivalence}\label{wei}

Suppose that $\mathcal{M}$ is a presilting subcategory of $\mathcal{T}$.
According to \cite[Proposition 2.5]{W1},
we see that $({_\mathcal{M}\mathscr{X}} \cap {\mathscr{X}_\mathcal{M}},{_\mathcal{M}\mathscr{X}} \cap {\mathscr{X}_\mathcal{M}})$
forms a $\mathcal{M}$-mutation pair.
Hence, as a consequence of Theorem \ref{thm1},
we get the following result.

\begin{cor}\label{triangle equivalence1}
Let $\mathcal{M}$ be a presilting subcategory of $\mathcal{T}$.
Then there exists a triangle equivalence
$${_\mathcal{M}\mathscr{X}} \cap {\mathscr{X}_\mathcal{M}}\,/\,[\mathcal{M}]\,\,\simeq\,\,
\langle  {_\mathcal{M}\mathscr{X}} \cap {\mathscr{X}_\mathcal{M}}\rangle/ \langle \mathcal{M}\rangle.$$
\end{cor}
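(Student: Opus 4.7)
The corollary is a direct specialization of the main theorem once the data of a mutation pair is identified. The plan is to set $\mathcal{D} := \mathcal{M}$ and $\mathcal{Z} := {_{\mathcal{M}}\mathscr{X}} \cap {\mathscr{X}_{\mathcal{M}}}$ inside $\mathcal{T}$, verify the hypotheses of Theorem \ref{thm1} in this specialized setup, and then read off the conclusion by unwinding the substitutions.

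There are three items to check. First, $\mathcal{D}$ is presilting, which is the standing hypothesis of the corollary. Second, $(\mathcal{Z},\mathcal{Z})$ forms a $\mathcal{D}$-mutation pair: this is exactly \cite[Proposition 2.5]{W1}, cited by the author in the paragraph immediately preceding the statement, so nothing new is required. Third, the containment $\mathcal{D} \subseteq \mathcal{Z}$: every $M \in \mathcal{M}$ lies in both $\widehat{\mathcal{M}}$ and $\widecheck{\mathcal{M}}$ via a trivial length-one filtration, and the author has already recorded that $\widehat{\mathcal{M}} \subseteq\, _{\mathcal{M}}\mathscr{X}$ and $\widecheck{\mathcal{M}} \subseteq \mathscr{X}_{\mathcal{M}}$ in the paragraph introducing these classes, so $\mathcal{M}$ sits in their intersection.

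With these three hypotheses in place, Theorem \ref{thm1} applied to the pair $(\mathcal{D},\mathcal{Z})$ yields the triangle equivalence
$$\mathcal{Z}/[\mathcal{D}] \,\,\simeq\,\, \langle\mathcal{Z}\rangle / \langle\mathcal{D}\rangle,$$
which upon substituting $\mathcal{D} = \mathcal{M}$ and $\mathcal{Z} = {_{\mathcal{M}}\mathscr{X}} \cap {\mathscr{X}_{\mathcal{M}}}$ is precisely the displayed equivalence in the statement. There is no substantive obstacle: all the real work (density via the Auslander-Buchweitz triangle of Theorem \ref{A-B Approximation2}, fullness via the co-t-structure of Fact \ref{presilting CO-T}, and faithfulness via Lemma \ref{key for faithful}) has already been absorbed into the proof of Theorem \ref{thm1}. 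The corollary therefore amounts to recognizing that \cite[Proposition 2.5]{W1} supplies exactly the input that the main theorem consumes.
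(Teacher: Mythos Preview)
Your proposal is correct and matches the paper's approach exactly: the paper derives this corollary in a single sentence by invoking \cite[Proposition 2.5]{W1} to obtain the $\mathcal{M}$-mutation pair and then applying Theorem \ref{thm1}. Your additional explicit verification that $\mathcal{M} \subseteq {_{\mathcal{M}}\mathscr{X}} \cap {\mathscr{X}_{\mathcal{M}}}$ is a harmless elaboration of a hypothesis the paper leaves implicit.
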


\begin{lem}\label{key wei}
Let $\mathcal{M}$ be a presilting subcategory of $\mathcal{T}$.
Then we have
$$\langle  {_\mathcal{M}\mathscr{X}} \cap {\mathscr{X}_\mathcal{M}}\rangle\,\,=\,\,
\langle  {_\mathcal{M}\mathscr{X}}\rangle \cap \langle{\mathscr{X}_\mathcal{M}}\rangle.$$
\end{lem}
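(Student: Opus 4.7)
The plan is to establish the two inclusions of the claimed equality separately, writing $\mathcal{C} := {_\mathcal{M}\mathscr{X}} \cap \mathscr{X}_\mathcal{M}$. The forward containment $\langle \mathcal{C}\rangle \subseteq \langle{_\mathcal{M}\mathscr{X}}\rangle \cap \langle\mathscr{X}_\mathcal{M}\rangle$ is immediate, since $\mathcal{C}$ sits inside each of ${_\mathcal{M}\mathscr{X}}$ and $\mathscr{X}_\mathcal{M}$ while the right-hand side is thick. The substantive work is the reverse inclusion. By Lemma \ref{necessary facts} combined with Lemma \ref{thick}, I would first identify $\langle{_\mathcal{M}\mathscr{X}}\rangle = \widecheck{{_\mathcal{M}\mathscr{X}}}$ (${_\mathcal{M}\mathscr{X}}$ coresolving) and $\langle\mathscr{X}_\mathcal{M}\rangle = \widehat{\mathscr{X}_\mathcal{M}}$ ($\mathscr{X}_\mathcal{M}$ resolving), and observe that $\mathcal{M} \subseteq \mathcal{C}$, hence $\widecheck{\mathcal{M}}, \widehat{\mathcal{M}} \subseteq \langle\mathcal{M}\rangle \subseteq \langle\mathcal{C}\rangle$.

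Starting from $X$ in the intersection, the plan is to apply two successive Auslander--Buchweitz approximations. First, Theorem \ref{A-B Approximation1} applies with $\mathcal{W} = \mathcal{M}$ and $\mathcal{X} = \mathscr{X}_\mathcal{M}$ (the defining triangles $N_i \to M_i \to N_{i+1}$ of membership in $\mathscr{X}_\mathcal{M}$ exhibit $\mathcal{M}$ as a weak-cogenerator in $\mathscr{X}_\mathcal{M}$). This produces a triangle
$$K \to X' \to X \to K[1]$$
with $X' \in \mathscr{X}_\mathcal{M}$ and $K \in \widehat{\mathcal{M}}$. Since $K \in \widehat{\mathcal{M}} \subseteq {_\mathcal{M}\mathscr{X}} \subseteq \langle{_\mathcal{M}\mathscr{X}}\rangle$ and $X \in \langle{_\mathcal{M}\mathscr{X}}\rangle$, triangulated closure forces $X' \in \langle{_\mathcal{M}\mathscr{X}}\rangle = \widecheck{{_\mathcal{M}\mathscr{X}}}$, so $X' \in \mathscr{X}_\mathcal{M} \cap \widecheck{{_\mathcal{M}\mathscr{X}}}$.

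Next, Theorem \ref{A-B Approximation2} applies with $\mathcal{V} = \mathcal{M}$ and $\mathcal{Y} = {_\mathcal{M}\mathscr{X}}$ (the defining triangles $N_{i+1} \to M_i \to N_i$ exhibit $\mathcal{M}$ as a weak-generator in ${_\mathcal{M}\mathscr{X}}$), applied to $X' \in \widecheck{{_\mathcal{M}\mathscr{X}}}$. This yields a triangle
$$Y' \to L' \to X' \to Y'[1]$$
with $Y' \in {_\mathcal{M}\mathscr{X}}$ and $L' \in \widecheck{\mathcal{M}} \subseteq \mathscr{X}_\mathcal{M}$. Since $\mathscr{X}_\mathcal{M}$ is resolving by Lemma \ref{necessary facts}(1) and $L', X' \in \mathscr{X}_\mathcal{M}$, the resolving characterisation forces $Y' \in \mathscr{X}_\mathcal{M}$, so $Y' \in \mathcal{C}$. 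Combining: $Y' \in \mathcal{C} \subseteq \langle\mathcal{C}\rangle$ and $L' \in \widecheck{\mathcal{M}} \subseteq \langle\mathcal{C}\rangle$ give $X' \in \langle\mathcal{C}\rangle$ via the second triangle; then $K \in \widehat{\mathcal{M}} \subseteq \langle\mathcal{C}\rangle$ and the first triangle give $X \in \langle\mathcal{C}\rangle$.

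The main delicate point is the stability after each approximation: that $X'$ remains inside $\langle{_\mathcal{M}\mathscr{X}}\rangle$ after the first step, and that $Y'$ lands inside $\mathscr{X}_\mathcal{M}$ after the second. Both hinge on matching the direction of the approximation triangle with the resolving/coresolving direction of the subcategory one wants to re-enter, and this matching is exactly what dictates the order in which the two Auslander--Buchweitz approximation theorems must be invoked.
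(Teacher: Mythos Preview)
Your proof is correct and follows essentially the same two-step Auslander--Buchweitz approximation strategy as the paper. The only cosmetic difference is which of the two triangles offered by Theorems~\ref{A-B Approximation1} and~\ref{A-B Approximation2} you pick: the paper uses the forms $M \to K^M \to X^M \to M[1]$ and $X^M \to {_MX} \to {_ML} \to X^M[1]$, so that the stability step for $\mathscr{X}_\mathcal{M}$ uses extension-closure rather than the resolving characterisation, but the argument is otherwise identical.
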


\begin{proof}
Obviously, we have $\langle  {_\mathcal{M}\mathscr{X}} \cap {\mathscr{X}_\mathcal{M}}\rangle\subseteq
\langle  {_\mathcal{M}\mathscr{X}}\rangle \cap \langle{\mathscr{X}_\mathcal{M}}\rangle$.
Hence, it remains to show that the converse containment holds.

Let $M$ be an object in $\langle{_\mathcal{M}\mathscr{X}}\rangle \cap \langle{\mathscr{X}_\mathcal{M}}\rangle$.
Then by Lemma \ref{necessary facts}(1) and Lemma \ref{thick}(1),
we see that $M\in\widehat{{\mathscr{X}_\mathcal{M}}}$.
Note that $\mathcal{M}$ is clear a weak-cogenerator in $\mathscr{X}_\mathcal{M}$.
It follows from Theorem \ref{A-B Approximation1} that
there exists a triangle
$$ M\to K^M\to X^M\to M[1]$$
in $\mathcal{T}$ with $X^M\in\mathscr{X}_\mathcal{M}$ and $K^M\in\widehat{\mathcal{ M}}\subseteq \langle\mathcal{ M}\rangle\subseteq\langle{_\mathcal{M}\mathscr{X}}\rangle$.
Since $M$ belongs to $\langle{_\mathcal{M}\mathscr{X}}\rangle$ by assumption,
we deduce that $X^M \in \langle{_\mathcal{M}\mathscr{X}}\rangle=\widecheck{{_\mathcal{M}\mathscr{X}}}$ by Lemma \ref{necessary facts}(2) and Lemma \ref{thick}(2).
Note that $\mathcal{M}$ is also a weak-generator in ${_\mathcal{M}\mathscr{X}}$.
By Theorem \ref{A-B Approximation2}, we have a triangle
$$X^M\to  {_MX}\to {_ML}\to X^M[1]$$
in $\mathcal{T}$ with $_MX\in{_\mathcal{M}\mathscr{X}}$ and
$_ML\in \widecheck{\mathcal{M}}\subseteq \langle\mathcal{ M}\rangle\subseteq \langle  {_\mathcal{M}\mathscr{X}} \cap {\mathscr{X}_\mathcal{M}}\rangle$.
Since both $X^M$ and ${_ML}$ belong to $\mathscr{X}_\mathcal{M}$
and $\mathscr{X}_\mathcal{M}$ is closed under extensions (see Lemma \ref{necessary facts}(1)),
we get that ${_MX}\in{_\mathcal{M}\mathscr{X}} \cap {\mathscr{X}_\mathcal{M}}\subseteq \langle  {_\mathcal{M}\mathscr{X}} \cap {\mathscr{X}_\mathcal{M}}\rangle$.
This implies that $X^M\in\langle  {_\mathcal{M}\mathscr{X}} \cap {\mathscr{X}_\mathcal{M}}\rangle$ as well.
Note that $K^M\in\langle\mathcal{ M}\rangle\subseteq \langle  {_\mathcal{M}\mathscr{X}} \cap {\mathscr{X}_\mathcal{M}}\rangle$.
It follows that $M\in\langle  {_\mathcal{M}\mathscr{X}} \cap {\mathscr{X}_\mathcal{M}}\rangle$.
Hence, $\langle{_\mathcal{M}\mathscr{X}}\rangle \cap \langle{\mathscr{X}_\mathcal{M}}\rangle\subseteq\langle  {_\mathcal{M}\mathscr{X}} \cap {\mathscr{X}_\mathcal{M}}\rangle$, as desired.
\end{proof}

As a consequence of Corollary \ref{triangle equivalence1} and Lemma \ref{key wei},
we obtain the following triangle equivalence,
which is the main result in \cite{W1}.

\begin{cor}\cite[Theorem A]{W1}\label{wei's}
Let $\mathcal{M}$ be a presilting subcategory of $\mathcal{T}$.
Then there exists a triangle equivalence
$${_\mathcal{M}\mathscr{X}} \cap {\mathscr{X}_\mathcal{M}}\,/\,[\mathcal{M}]\,\,\simeq\,\,
\langle  {_\mathcal{M}\mathscr{X}}\rangle \cap \langle{\mathscr{X}_\mathcal{M}}\rangle/ \langle \mathcal{M}\rangle.$$
\end{cor}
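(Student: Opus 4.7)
The plan is simply to compose Corollary \ref{triangle equivalence1} with Lemma \ref{key wei}. Since $({_\mathcal{M}\mathscr{X}} \cap {\mathscr{X}_\mathcal{M}},{_\mathcal{M}\mathscr{X}} \cap {\mathscr{X}_\mathcal{M}})$ forms an $\mathcal{M}$-mutation pair by \cite[Proposition 2.5]{W1}, applying Theorem \ref{thm1} with $\mathcal{D}=\mathcal{M}$ and $\mathcal{Z}={_\mathcal{M}\mathscr{X}} \cap {\mathscr{X}_\mathcal{M}}$ (i.e. invoking Corollary \ref{triangle equivalence1}) already delivers the triangle equivalence
$${_\mathcal{M}\mathscr{X}} \cap {\mathscr{X}_\mathcal{M}}\,/\,[\mathcal{M}]\,\,\simeq\,\,
\langle  {_\mathcal{M}\mathscr{X}} \cap {\mathscr{X}_\mathcal{M}}\rangle/ \langle \mathcal{M}\rangle.$$
To reach the form stated in the corollary, I need only rewrite the thick subcategory appearing in the denominator on the right.

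The required rewriting is exactly what Lemma \ref{key wei} provides, namely the equality
$$\langle  {_\mathcal{M}\mathscr{X}} \cap {\mathscr{X}_\mathcal{M}}\rangle \,=\, \langle  {_\mathcal{M}\mathscr{X}}\rangle \cap \langle{\mathscr{X}_\mathcal{M}}\rangle.$$
Substituting this equality into the Verdier quotient on the right-hand side of the equivalence above yields the claimed triangle equivalence. Thus the proof of Corollary \ref{wei's} itself amounts to a one-line substitution; no further argument is needed at this stage.

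The genuine work has in fact been done in the two inputs, rather than in the assembly. Theorem \ref{thm1} (behind Corollary \ref{triangle equivalence1}) requires organising the Auslander-Buchweitz approximation triangles of Theorems \ref{A-B Approximation1} and \ref{A-B Approximation2} together with the co-$t$-structure on $\langle\mathcal{D}\rangle$ of Fact \ref{presilting CO-T}, in order to verify that the comparison functor $\overline{F}$ is dense, full and faithful (the key technical step being Lemma \ref{key for faithful}, which uses the presilting hypothesis crucially). The substantive step in Lemma \ref{key wei} is the nontrivial containment $\langle{_\mathcal{M}\mathscr{X}}\rangle \cap \langle{\mathscr{X}_\mathcal{M}}\rangle \subseteq \langle{_\mathcal{M}\mathscr{X}} \cap {\mathscr{X}_\mathcal{M}}\rangle$, obtained by first right-approximating an object by one in $\mathscr{X}_\mathcal{M}$ modulo $\widehat{\mathcal{M}}$ and then left-approximating the result by one in ${_\mathcal{M}\mathscr{X}} \cap {\mathscr{X}_\mathcal{M}}$ modulo $\widecheck{\mathcal{M}}$. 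Given these two inputs, the corollary follows immediately and poses no independent obstacle.
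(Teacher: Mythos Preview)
Your proposal is correct and matches the paper's own approach exactly: the paper states Corollary~\ref{wei's} simply ``as a consequence of Corollary~\ref{triangle equivalence1} and Lemma~\ref{key wei}'', with no further argument. One small terminological slip: the thick subcategory you rewrite via Lemma~\ref{key wei} is the ambient category (the ``numerator'') of the Verdier quotient, not the ``denominator'' $\langle\mathcal{M}\rangle$, which is unchanged.
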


In 2012, Aihara and Iyama introduced in \cite{AI} the notion of a \emph{silting reduction} of $\mathcal{T}$
as the triangulated quotient $\mathcal{T}/\langle\mathcal{M}\rangle$,
where $\mathcal{M}$ is a presilting subcategory of $\mathcal{T}$.
Recently, Iyama and Yang proved the following result
which shows that under some conditions such a silting reduction can be realized as a certain subfactor triangulated category of $\mathcal{T}$.
According to \cite[Corollary 2.7]{W1},
we see that the following Iyama and Yang's result can be deduced from \cite[Theorem A]{W1} (see Corollary \ref{wei's}),
and, hence, is also a special case of our realization in Theorem \ref{thm1}.

\begin{cor}\cite[Theorem 3.6]{IYa}\label{Yang1}
Let $\mathcal{M}$ be a presilting subcategory of $\mathcal{T}$.
Suppose that

$(1)$ $\mathcal{M}$ is functorially finite in $\mathcal{T}$, and

$(2)$ for any object $X\in\mathcal{T}$,
$\Hom_{\mathcal{T}}(X, \mathcal{M}[i]) =0 =\Hom_{\mathcal{M}}(\mathcal{M}, X[i])$ for $i \gg 0$.\\
Then there exists a triangle equivalence
$${^{\perp_{i>0}}\mathcal{M}}\cap\mathcal{M}^{\perp_{i>0}}\,/\,[\mathcal{M}]\,\,\simeq\,\,\mathcal{T}/\langle\mathcal{M}\rangle.$$
\end{cor}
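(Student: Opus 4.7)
The plan is to deduce Corollary \ref{Yang1} from Corollary \ref{wei's} by identifying the two sides of the equivalence stated there with the two sides in the Iyama--Yang formulation. Concretely, Corollary \ref{wei's} gives
$${_\mathcal{M}\mathscr{X}} \cap {\mathscr{X}_\mathcal{M}}\,/\,[\mathcal{M}]\;\simeq\;\langle{_\mathcal{M}\mathscr{X}}\rangle \cap \langle{\mathscr{X}_\mathcal{M}}\rangle\big/\langle \mathcal{M}\rangle,$$
so it suffices to verify, under the hypotheses (1) and (2), the two equalities
$${_\mathcal{M}\mathscr{X}} \cap {\mathscr{X}_\mathcal{M}} = {^{\perp_{i>0}}\mathcal{M}}\cap\mathcal{M}^{\perp_{i>0}}\quad\text{and}\quad \langle{_\mathcal{M}\mathscr{X}}\rangle \cap \langle{\mathscr{X}_\mathcal{M}}\rangle=\mathcal{T}.$$

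First I would establish the inclusion ${_\mathcal{M}\mathscr{X}} = \mathcal{M}^{\perp_{i>0}}$ using the contravariant finiteness of $\mathcal{M}$. Given $N\in\mathcal{M}^{\perp_{i>0}}$, take a right $\mathcal{M}$-approximation $M_0 \to N$, complete it to a triangle $N_1 \to M_0 \to N \to N_1[1]$, and verify from the resulting long exact $\Hom$-sequence (using that $\mathcal{M}$ is presilting and that $\Hom_\mathcal{T}(\mathcal{M},M_0)\twoheadrightarrow\Hom_\mathcal{T}(\mathcal{M},N)$) that $N_1\in \mathcal{M}^{\perp_{i>0}}$. Iterating produces the required sequence of triangles realising $N\in {_\mathcal{M}\mathscr{X}}$. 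The dual argument with left $\mathcal{M}$-approximations (furnished by covariant finiteness) yields $\mathscr{X}_\mathcal{M} = {^{\perp_{i>0}}\mathcal{M}}$, and intersecting gives the first equality.

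Next I would handle the second equality. By Lemma \ref{necessary facts}, ${_\mathcal{M}\mathscr{X}}$ is coresolving and $\mathscr{X}_\mathcal{M}$ is resolving, so Lemma \ref{thick} yields
$$\langle{_\mathcal{M}\mathscr{X}}\rangle=({_\mathcal{M}\mathscr{X}})_{-}\quad\text{and}\quad\langle\mathscr{X}_\mathcal{M}\rangle=(\mathscr{X}_\mathcal{M})_{+}.$$
Given $X\in\mathcal{T}$, hypothesis (2) supplies integers $n_1,n_2\geqslant 0$ with $\Hom_\mathcal{T}(X[n_1],\mathcal{M}[{>}0])=0$ and $\Hom_\mathcal{T}(\mathcal{M},X[-n_2][{>}0])=0$. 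Thus $X[n_1]\in\mathcal{M}^{\perp_{i>0}}={_\mathcal{M}\mathscr{X}}$, so $X\cong (X[n_1])[-n_1]\in({_\mathcal{M}\mathscr{X}})_{-}$, and similarly $X\in(\mathscr{X}_\mathcal{M})_{+}$. Hence $\mathcal{T}\subseteq\langle{_\mathcal{M}\mathscr{X}}\rangle\cap\langle\mathscr{X}_\mathcal{M}\rangle$, and the reverse inclusion is automatic. Substituting these two equalities into Corollary \ref{wei's} delivers the desired triangle equivalence.

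I expect the main obstacle to be the careful verification in the first step that each new syzygy $N_{i+1}$ still lies in $\mathcal{M}^{\perp_{i>0}}$; this is the step that genuinely uses both the presilting property of $\mathcal{M}$ and the approximation property (not just the existence of some $\mathcal{M}$-morphism into $N_i$). Once that is in place, the rest is essentially bookkeeping: hypothesis (1) is used only in step (i), while hypothesis (2) is used only in step (ii), and the two pieces combine cleanly through Corollary \ref{wei's}.
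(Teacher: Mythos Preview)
Your approach is exactly the one the paper indicates: it simply cites \cite[Corollary 2.7]{W1} to reduce Corollary~\ref{Yang1} to Corollary~\ref{wei's}, and your two displayed equalities are precisely the content of that reduction. The strategy is sound and the first step (using functorial finiteness to prove ${_\mathcal{M}\mathscr{X}}=\mathcal{M}^{\perp_{i>0}}$ and its dual) is carried out correctly.

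There is, however, a bookkeeping slip in your second step. The condition $\Hom_\mathcal{T}(X[n_1],\mathcal{M}[{>}0])=0$ says $X[n_1]\in{^{\perp_{i>0}}\mathcal{M}}$, not $\mathcal{M}^{\perp_{i>0}}$; moreover, with a \emph{positive} shift $n_1$ this vanishing does not follow from hypothesis~(2), since it would require $\Hom_\mathcal{T}(X,\mathcal{M}[k])=0$ for all $k>-n_1$. The correct pairing is: from $\Hom_\mathcal{T}(\mathcal{M},X[i])=0$ for $i\gg0$ one gets $X[n]\in\mathcal{M}^{\perp_{i>0}}={_\mathcal{M}\mathscr{X}}$ for $n\gg0$, hence $X\in({_\mathcal{M}\mathscr{X}})_{-}$; and from $\Hom_\mathcal{T}(X,\mathcal{M}[i])=0$ for $i\gg0$ one gets $X[-n]\in{^{\perp_{i>0}}\mathcal{M}}=\mathscr{X}_\mathcal{M}$ for $n\gg0$, hence $X\in(\mathscr{X}_\mathcal{M})_{+}$. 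Once you swap the two halves of hypothesis~(2) in this way, your argument goes through unchanged.
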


\subsection{\bf Applications for projective Frobenius subcategories}\label{PFS}

Throughout this subsection, let $\mathcal{A}$ be an abelian category with enough projective objects.
The subcategory of $\mathcal{A}$ consisting of all projective objects is denoted by $\mathcal{P}$.

Let $\mathcal{G}$ be a subcategory of $\mathcal{A}$ closed under extensions.
Then $\mathcal{G}$ becomes an exact category whose elements in the exact structure
are just short exact sequences in $\mathcal{A}$ such that all terms belong to $\mathcal{G}$.
We say that $\mathcal{G}$ is a \emph{Frobenius subcategory} of $\mathcal{A}$
if $\mathcal{G}$ forms a Frobenius category with respect to such an exact structure.

The following observation plays a key role to connect our main result Theorem \ref{thm1} with applications on Frobenius subcategories.

\begin{lem}\label{mutation}
Let $\mathcal{W}\subseteq\mathcal{G}$ be two subcategories of $\mathcal{A}$.
Then $\mathcal{G}$ is a Frobenius subcategory of $\mathcal{A}$
whose projective-injective objects are precisely
the objects in $\mathcal{W}$
if and only if the pair $(\mathcal{G,G})$ of subcategories $($considered in $D^\mathrm{b}(\mathcal{A}))$
forms a $\mathcal{W}$-mutation pair in $D^\mathrm{b}(\mathcal{A})$.
\end{lem}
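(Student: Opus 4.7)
The plan rests on the standard dictionary between triangles in $D^\mathrm{b}(\mathcal{A})$ whose three vertices sit in $\mathcal{A}$ and short exact sequences in $\mathcal{A}$. Concretely, if $X \to Y \to Z \to X[1]$ is a triangle in $D^\mathrm{b}(\mathcal{A})$ with $X,Z$ concentrated in degree zero, then the long exact sequence in cohomology forces $Y$ to be concentrated in degree zero as well and produces a short exact sequence $0 \to X \to Y \to Z \to 0$ in $\mathcal{A}$; conversely, every such short exact sequence yields such a triangle, and these constructions are mutually inverse under the canonical isomorphism $\Hom_{D^\mathrm{b}(\mathcal{A})}(Z,X[1]) \cong \Ext^1_{\mathcal{A}}(Z,X)$. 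I would record this correspondence at the outset and use it throughout.

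For the forward direction (Frobenius structure $\Rightarrow$ mutation pair), the plan is to verify the three axioms of Definition \ref{Df of Mutation} in turn. Extension-closure of $\mathcal{G}$ in $D^\mathrm{b}(\mathcal{A})$ reduces, via the dictionary, to extension-closure of $\mathcal{G}$ in $\mathcal{A}$, which is part of the hypothesis. The triangles required by axiom (2) come directly from the Frobenius conflations: every $Z \in \mathcal{G}$ sits in a conflation $0 \to Z^+ \to W^+ \to Z \to 0$ with $W^+ \in \mathcal{W}$ and $Z^+ \in \mathcal{G}$, together with the dual injective conflation $0 \to Z \to W^- \to Z^- \to 0$. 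For axiom (3), the identification $\Hom_{D^\mathrm{b}(\mathcal{A})}(G,W[1]) = \Ext^1_{\mathcal{A}}(G,W)$ shows that any short exact sequence $0 \to W \to E \to G \to 0$ in $\mathcal{A}$ has $E \in \mathcal{G}$ by extension-closure, is therefore a conflation in the Frobenius structure, and splits because $W$ is injective therein; the dual argument handles $\Hom_{D^\mathrm{b}(\mathcal{A})}(\mathcal{W},\mathcal{G}[1]) = 0$.

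For the converse (mutation pair $\Rightarrow$ Frobenius subcategory), I would first deduce extension-closure of $\mathcal{G}$ inside $\mathcal{A}$ by reversing the dictionary, so that $\mathcal{G}$ inherits a natural exact structure whose conflations are the short exact sequences of $\mathcal{A}$ with all three terms in $\mathcal{G}$. Translating the mutation triangles of axiom (2) back via the dictionary then yields enough admissible epimorphisms from $\mathcal{W}$ and enough admissible monomorphisms into $\mathcal{W}$, while axiom (3) becomes $\Ext^1_{\mathcal{A}}(\mathcal{G},\mathcal{W}) = 0 = \Ext^1_{\mathcal{A}}(\mathcal{W},\mathcal{G})$, certifying that every object of $\mathcal{W}$ is projective-injective in this exact structure. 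Combined, these facts make $\mathcal{G}$ a Frobenius category with $\mathcal{W}$ sitting among its projective-injectives.

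The main obstacle, and really the only delicate point, is to show that $\mathcal{W}$ exhausts the projective-injectives rather than being a proper subclass of them. The plan is to invoke the mutation conflation already constructed: if $P \in \mathcal{G}$ is projective in the exact structure, then the conflation $0 \to P^+ \to W^+ \to P \to 0$ splits by projectivity of $P$, exhibiting $P$ as a direct summand of $W^+ \in \mathcal{W}$; the standing convention that all subcategories are closed under direct summands then forces $P \in \mathcal{W}$. The injective case is handled by the symmetric argument using the conflation $0 \to P \to W^- \to P^- \to 0$, and together these identify $\mathcal{W}$ with the full subcategory of projective-injective objects.
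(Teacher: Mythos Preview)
Your proposal is correct and follows essentially the same route as the paper: both arguments rest on the equivalence between extension-closure in $\mathcal{A}$ and in $D^{\mathrm{b}}(\mathcal{A})$, together with the identification $\Hom_{D^{\mathrm{b}}(\mathcal{A})}(M,N[1])\cong \Ext^1_{\mathcal{A}}(M,N)$, and then translate the three mutation-pair axioms into the Frobenius data and back. The paper dismisses the final step (that $\mathcal{W}$ is \emph{exactly} the class of projective-injectives) as ``routine to check''; your splitting argument using the mutation conflations and closure under direct summands is precisely that routine check made explicit.
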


\begin{proof}
It is an easy observation that $\mathcal{G}$ is closed under extensions in $\mathcal{A}$
if and only if it is closed under extensions in $D^\mathrm{b}(\mathcal{A})$.
Moreover, since $\mathcal{A}$ has enough projective objects by assumption,
there exists an isomorphism $\Hom_{D^\mathrm{b}(\mathcal{A})}(M,N[1])\cong \Ext_{\mathcal{A}}^1(M,N)$
for all objects $M,N$ in $\mathcal{A}$.

For the `only-if' part, the condition (2) of Definition \ref{Df of Mutation} can be obtained from the property
that $\mathcal{G}$ has enough projectives and injectives.
The condition (3) of Definition \ref{Df of Mutation} can be guaranteed by the above isomorphism and
the fact that the objects in $\mathcal{W}$ are both projective and injective in $\mathcal{G}$.

For the `if' part, note that $(\mathcal{G,G})$ forms a $\mathcal{W}$-mutation pair in $D^\mathrm{b}(\mathcal{A})$ by assumption.
The condition (3) in Definition \ref{Df of Mutation} together with the above isomorphism
show that the objects in $\mathcal{W}$ are both projective and injective in $\mathcal{G}$.
The condition (2) in Definition \ref{Df of Mutation} implies that
$\mathcal{G}$ has enough projectives and injectives.
Finally, it is routine to check that
in the exact category $\mathcal{G}$ the subcategory of projective objects coincides with the subcategory of injective objects,
and projective-injective objects are objects in $\mathcal{W}$.
\end{proof}

In order to give more applications of Theorem \ref{thm1},
it is convenient to introduce the following notion of a projective Frobenius subcategory of $\mathcal{A}$.
Recall that a subcategory $\mathcal{G}$ of $\mathcal{A}$ is called \emph{resolving} if
it contains $\mathcal{P}$, and is closed under extensions and kernels of epimorphisms.

\begin{df}\label{Projective Frobenius}
{\rm
A Frobenius subcategory $\mathcal{G}$ of $\mathcal{A}$ is said to be \emph{projective}
provided that

(1) It is resolving.

(2) Its projective-injective objects are just the projective objects of $\mathcal{A}$.

As usual, we denote the stable category $\mathcal{G}/[\mathcal{P}]$ by $\underline{\mathcal{G}}$ ;
it is a triangulated category (we refer the reader to \cite{Ha} for more details).
}
\end{df}

As a consequence of Theorem \ref{thm1} and Lemma \ref{mutation},
we obtain

\begin{cor}\label{equivalence}
Let $\mathcal{G}$ be a projective Frobenius subcategory of $\mathcal{A}$.
Then there exists a triangle equivalence
$$\underline{\mathcal {G}}\,\,\simeq\,\, \langle\mathcal{G}\rangle/ \langle\mathcal{P}\rangle.$$
\end{cor}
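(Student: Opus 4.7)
The plan is to realize the statement as a direct application of Theorem \ref{thm1}, with the triangulated category taken to be $\mathcal{T}=D^{\mathrm{b}}(\mathcal{A})$ and with the data $\mathcal{D}=\mathcal{P}$ and $\mathcal{Z}=\mathcal{G}$. The bridge between the ambient abelian category and the derived setting is provided by Lemma \ref{mutation}, while the presilting hypothesis needed to invoke Theorem \ref{thm1} comes for free from the projectivity of the objects of $\mathcal{P}$.

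First, I would verify that $(\mathcal{G},\mathcal{G})$ is a $\mathcal{P}$-mutation pair in $D^{\mathrm{b}}(\mathcal{A})$. By the definition of a projective Frobenius subcategory (Definition \ref{Projective Frobenius}), $\mathcal{G}$ is a Frobenius subcategory of $\mathcal{A}$ whose projective-injective objects coincide with $\mathcal{P}$. Hence the `only-if' direction of Lemma \ref{mutation}, applied with $\mathcal{W}=\mathcal{P}$, yields exactly the $\mathcal{P}$-mutation pair structure on $(\mathcal{G},\mathcal{G})$ inside $D^{\mathrm{b}}(\mathcal{A})$.

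Next, I would check that $\mathcal{P}$ is presilting in $D^{\mathrm{b}}(\mathcal{A})$. For any two projective objects $P,P'\in\mathcal{P}$ and any integer $i\geqslant 1$, one has
$$\Hom_{D^{\mathrm{b}}(\mathcal{A})}(P,P'[i])\,\cong\,\Ext_{\mathcal{A}}^{i}(P,P')\,=\,0,$$
so the presilting condition of Definition \ref{df of silting} holds. With both hypotheses of Theorem \ref{thm1} verified, the theorem delivers a triangle equivalence
$$\mathcal{G}/[\mathcal{P}]\,\,\simeq\,\,\langle\mathcal{G}\rangle/\langle\mathcal{P}\rangle.$$

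To finish, I would identify the left-hand side with the stable category $\underline{\mathcal{G}}$. By Definition \ref{Projective Frobenius} the projective-injective objects of the Frobenius category $\mathcal{G}$ are exactly the objects of $\mathcal{P}$, so as additive quotients $\underline{\mathcal{G}}=\mathcal{G}/[\mathcal{P}]$. The only genuine thing to check is that the triangulated structure produced by Iyama--Yoshino on the mutation-pair quotient $\mathcal{G}/[\mathcal{P}]$ (Theorem \ref{triangulated structure}) coincides with Happel's classical triangulated structure on the Frobenius stable category $\underline{\mathcal{G}}$. This is where I expect the only mild subtlety: one has to observe that the distinguished triangle $Z\to D_{Z}\to Z\langle 1\rangle\to Z[1]$ used to define the Iyama--Yoshino shift is precisely the derived-category image of a conflation $0\to Z\to D_{Z}\to \Omega^{-1}Z\to 0$ in $\mathcal{G}$ with $D_{Z}$ projective-injective, so the two shift functors and the two families of triangles agree up to natural isomorphism. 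Composing this identification with the equivalence above yields the desired triangle equivalence $\underline{\mathcal{G}}\simeq\langle\mathcal{G}\rangle/\langle\mathcal{P}\rangle$.
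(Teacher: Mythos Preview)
Your proposal is correct and follows exactly the route the paper takes: the corollary is stated there simply as a consequence of Theorem \ref{thm1} and Lemma \ref{mutation}, and you have spelled out precisely those two ingredients together with the (tacit) check that $\mathcal{P}$ is presilting in $D^{\mathrm{b}}(\mathcal{A})$. Your additional remark comparing the Iyama--Yoshino and Happel triangulated structures on $\mathcal{G}/[\mathcal{P}]$ is a point the paper leaves implicit, but it is correct and welcome.
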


Inspired by the Christensen's notion of Gorenstein projective dimension for complexes in $D^-(\mathcal{A})$ (see \cite[Subsection 1.7]{lwc2}),
we introduce the following notion of projective dimension with respect to a projective Frobenius subcategory of $\mathcal{A}$.

\begin{df}\label{dimension of complex}{\rm
Let $\mathcal{G}$ be a projective Frobenius subcategory of $\mathcal{A}$
and $X$ a complex in $D^-(\mathcal{A})$.
The $\mathcal{G}$-\emph{projective} \emph{dimension} of $X$, denoted by $\mathcal{G}$-$\dim X$, is defined as

\begin{center}
$\begin{aligned}\mathcal{G}\textrm{-}\dim X=\inf\,\{\,\sup\,\{\,l\in\mathbb{Z}\mid G_{-l}\neq0\,\}\mid\, \,& G\simeq X,
\textrm{~where}~G~\textrm{is a bounded-above complex}\\ & \textrm{with each~} G_{-l}\in\mathcal{G}\,\}.
\end{aligned}$
\end{center}

}\end{df}

Following from the definition above,
we have the next assertions.

\begin{rem}\label{closed under equi}
{\rm Suppose that $\mathcal{G}$ is a projective Frobenius subcategory of $\mathcal{A}$.

$(1)$ For any complex $X\in D^-(\mathcal{A})$ and any $k\in\mathbb{Z}$,
      we see that $\mathcal{G} \textrm{-}\dim(X[k])=\mathcal{G} \textrm{-}\dim X+k$.

$(2)$  Note that the $\mathcal{G}$-projective dimension of complexes is defined based upon the equivalence relation of complexes.
       This implies that the subcategory of $D^-(\mathcal{A})$
       consisting of all complexes with finite $\mathcal{G}$-projective dimension
       is closed under equivalences of complexes.

$(3)$  Let $M$ be an object in $\mathcal{A}$.
       Then the $\mathcal{G}$-projective dimension of $M$ (considered as a stalk complex)
       is the least non-negative integer $n$ such that there exists an exact sequence
       $$0 \to G_n \to \cdots \to G_1 \to G_0 \to M \to 0$$
       with $G_i \in\mathcal{ G}$ for all $0\leqslant i\leqslant n$.
}\end{rem}

One can obtain the coming result by a similar argument to the proof of \cite[Theorem 3.9(1)]{V}.

\begin{lem}\label{second in third}
Let $\mathcal{G}$ be a projective Frobenius subcategory of $\mathcal{A}$.
Suppose that $$ M \to M' \to M''\to M[1]$$ is a triangle in $D^\mathrm{b}(\mathcal{A})$.
If any two complexes of $M$, $M'$ and $M''$ have finite $\mathcal{G}$-projective dimension,
then so does the third.
\end{lem}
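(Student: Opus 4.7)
The strategy is to interpret ``finite $\mathcal{G}$-projective dimension'' as membership in a triangulated subcategory of $D^\mathrm{b}(\mathcal{A})$, and then to verify the two-out-of-three property by a direct resolution-and-truncation argument. By Definition \ref{dimension of complex}, a complex $X \in D^\mathrm{b}(\mathcal{A})$ has finite $\mathcal{G}$-projective dimension precisely when it is equivalent to some bounded complex whose entries all lie in $\mathcal{G}$; equivalently $X$ lies in the essential image $\mathcal{F}$ of the natural functor $K^\mathrm{b}(\mathcal{G}) \to D^\mathrm{b}(\mathcal{A})$. So the lemma is exactly the assertion that $\mathcal{F}$ is closed under triangles in the two-out-of-three sense. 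By Remark \ref{closed under equi}(1), $\mathcal{F}$ is stable under the shift functor, so after rotating the triangle $M \to M' \to M'' \to M[1]$ it suffices to prove the single case: if $M, M' \in \mathcal{F}$, then $M'' \in \mathcal{F}$.

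The next step is to realize the morphism $f\colon M \to M'$ by an honest chain map between complexes whose entries lie in $\mathcal{G}$. Choose bounded representatives $G \simeq M$ and $G' \simeq M'$ in $K^\mathrm{b}(\mathcal{G})$. Since $\mathcal{A}$ has enough projectives and $G$ is bounded, one can form a bounded-above projective resolution $P \to G$; as $P$ is K-projective, the induced morphism $G \to G'$ in $D^\mathrm{b}(\mathcal{A})$ lifts to an honest chain map $\tilde f\colon P \to G'$. Because $\mathcal{P} \subseteq \mathcal{G}$, every entry of $\mathrm{cone}(\tilde f)$ lies in $\mathcal{G}$, and $\mathrm{cone}(\tilde f) \simeq M''$; the only remaining defect is that $\mathrm{cone}(\tilde f)$ is only bounded above and so may have infinitely many nonzero components in low degree.

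To repair boundedness I would truncate at a sufficiently low degree. Fix an integer $N$ strictly larger than $\mathcal{G}\text{-}\dim M$, $\mathcal{G}\text{-}\dim M'$, and the cohomological amplitude of $M''$; then the good truncation $\tau^{\geq -N}\,\mathrm{cone}(\tilde f)$ is a bounded complex quasi-isomorphic to $\mathrm{cone}(\tilde f)$, hence to $M''$. Its entries in degrees strictly above $-N$ already lie in $\mathcal{G}$, and its bottom entry at degree $-N$ is a syzygy of the mapping cone, which fits into a short exact sequence built from the $N$th syzygies of $G$ and $G'$. Because $\mathcal{G}\text{-}\dim M, \mathcal{G}\text{-}\dim M' \leq N$, each of these two syzygies lies in $\mathcal{G}$; the resolving property of $\mathcal{G}$ (in particular, closure under kernels of epimorphisms between $\mathcal{G}$-objects) then forces the bottom entry of the truncated cone into $\mathcal{G}$ as well, placing $M''$ in $\mathcal{F}$.

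The main obstacle is the syzygy bookkeeping at the bottom of the truncated cone: one has to identify the bottom object as a kernel arising from an epimorphism within $\mathcal{G}$, so that the resolving hypothesis can actually be invoked (note that $\mathcal{G}$ is \emph{not} assumed to be closed under arbitrary cokernels). This is the technical step corresponding to the argument in the proof of \cite[Theorem 3.9(1)]{V}, adapted from Gorenstein projective dimension to the present setting of a projective Frobenius subcategory.
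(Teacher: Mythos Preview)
Your outline is correct and in fact supplies far more detail than the paper, which gives no argument beyond the sentence ``one can obtain the result by a similar argument to the proof of \cite[Theorem~3.9(1)]{V}.'' Your reduction to the essential image of $K^{\mathrm b}(\mathcal G)\to D^{\mathrm b}(\mathcal A)$, the replacement of $G$ by a bounded-above projective resolution $P$, and the truncation of the mapping cone are exactly the moves the cited argument makes.

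One point deserves tightening. You describe the bottom entry of the truncated cone as sitting in ``a short exact sequence built from the $N$th syzygies of $G$ and $G'$''. But once $-N$ lies below the support of $G'$, the cone of $\tilde f\colon P\to G'$ agrees in those degrees with $P[1]$, so that bottom entry is a (co)syzygy of $P$, not of the bounded complex $G$. Concluding that this syzygy of $P$ lies in $\mathcal G$ is not automatic from the inequality $\mathcal G\text{-}\dim M\le N$ alone; the clean way to see it is to observe that $\mathrm{cone}(P\to G)$ is an acyclic bounded-above complex with all entries in $\mathcal G$ (since $\mathcal P\subseteq\mathcal G$), and then to run descending induction from the top using closure under kernels of epimorphisms, forcing every kernel --- in particular the relevant syzygy of $P$ --- into $\mathcal G$. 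You correctly flag this as the ``main obstacle'' and point to the same reference the paper does, so the issue is acknowledged rather than hidden; with this correction the argument is complete.
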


Suppose that $\mathcal{G}$ is a projective Frobenius subcategory of $\mathcal{A}$.
In what follows, denote by $D^\mathrm{b}(\mathcal{A})_{\widehat{\mathcal{G}}}$ the subcategory of $D^\mathrm{b}(\mathcal{A})$
consisting of all complexes with finite $\mathcal{G}$-projective dimension.
Combining Remark \ref{closed under equi} with Lemma \ref{second in third},
one can conclude that $D^\mathrm{b}(\mathcal{A})_{\widehat{\mathcal{G}}}$ is a triangulated subcategory of $D^\mathrm{b}(\mathcal{A})$.

\begin{cor}\label{key cor}
Let $\mathcal{G}$ be a projective Frobenius subcategory of $\mathcal{A}$.
Then there exists a triangle equivalence
$$\underline{\mathcal {G}}\,\,\simeq\,\, D^\mathrm{b}(\mathcal{A})_{\widehat{\mathcal{G}}}/ K^\mathrm{b} (\mathcal{P}).$$
\end{cor}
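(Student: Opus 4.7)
The plan is to deduce this corollary directly from Corollary \ref{equivalence}, which already gives a triangle equivalence $\underline{\mathcal{G}}\simeq \langle\mathcal{G}\rangle/\langle\mathcal{P}\rangle$ inside $D^{\mathrm{b}}(\mathcal{A})$. It therefore suffices to identify the two thick subcategories, namely to prove the equalities
$$\langle\mathcal{G}\rangle \;=\; D^{\mathrm{b}}(\mathcal{A})_{\widehat{\mathcal{G}}}\quad\text{and}\quad \langle\mathcal{P}\rangle \;=\; K^{\mathrm{b}}(\mathcal{P})$$
as subcategories of $D^{\mathrm{b}}(\mathcal{A})$. Once both are established, the quotient functor induces the desired triangle equivalence.

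For the first equality, the inclusion $\langle\mathcal{G}\rangle \subseteq D^{\mathrm{b}}(\mathcal{A})_{\widehat{\mathcal{G}}}$ is immediate: every stalk complex concentrated in $\mathcal{G}$ has $\mathcal{G}$-projective dimension zero, and $D^{\mathrm{b}}(\mathcal{A})_{\widehat{\mathcal{G}}}$ is a thick subcategory of $D^{\mathrm{b}}(\mathcal{A})$ containing $\mathcal{G}$ by Remark \ref{closed under equi} and Lemma \ref{second in third}. For the reverse inclusion, let $X \in D^{\mathrm{b}}(\mathcal{A})_{\widehat{\mathcal{G}}}$ have $\mathcal{G}\textrm{-}\dim X = n < \infty$. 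By Definition \ref{dimension of complex} there is a bounded-above complex $G$ with $G_{-l}\in\mathcal{G}$ for all $l$ and $G_{-l}=0$ for $l>n$, so in fact $G$ is bounded with all nonzero terms in $\mathcal{G}$, and $G\simeq X$ means $G\cong X$ in $D^{\mathrm{b}}(\mathcal{A})$. One now shows by induction on the number of nonzero components of $G$ that $G\in\langle\mathcal{G}\rangle$: stupid truncation produces a triangle
$$\sigma_{<b}G \;\longrightarrow\; G \;\longrightarrow\; G_b[-b] \;\longrightarrow\; \sigma_{<b}G[1]$$
in $D^{\mathrm{b}}(\mathcal{A})$ (where $b$ is the top degree), and since $G_b[-b]\in\langle\mathcal{G}\rangle$ and $\sigma_{<b}G\in\langle\mathcal{G}\rangle$ by the inductive hypothesis, so is $G$. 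Hence $X\cong G\in\langle\mathcal{G}\rangle$.

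The second equality $\langle\mathcal{P}\rangle = K^{\mathrm{b}}(\mathcal{P})$ follows by the same stupid-truncation induction applied to complexes of projectives, together with the standard fact that, since $\mathcal{A}$ has enough projectives, the natural functor $K^{\mathrm{b}}(\mathcal{P})\to D^{\mathrm{b}}(\mathcal{A})$ is fully faithful and its essential image is a thick subcategory; so it is precisely the thick subcategory generated by $\mathcal{P}$. Putting this together with the first equality and Corollary \ref{equivalence} gives
$$\underline{\mathcal{G}}\;\simeq\;\langle\mathcal{G}\rangle/\langle\mathcal{P}\rangle\;=\;D^{\mathrm{b}}(\mathcal{A})_{\widehat{\mathcal{G}}}/K^{\mathrm{b}}(\mathcal{P}),$$
as desired.

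The only subtlety I anticipate is that the $\mathcal{G}$-projective dimension is defined via the equivalence relation $\simeq$ on complexes rather than strict isomorphism, so one must remember that $G\simeq X$ in $D^{-}(\mathcal{A})$ implies $G\cong X$ in $D^{\mathrm{b}}(\mathcal{A})$ (which is where boundedness of $G$ places it). Otherwise every step reduces to the two thick-subcategory identifications described above, and the triangle equivalence is then a formal consequence of Corollary \ref{equivalence}.
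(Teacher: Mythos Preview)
Your proof is correct and follows essentially the same approach as the paper: both reduce the corollary to Corollary~\ref{equivalence} and then identify $\langle\mathcal{P}\rangle=K^{\mathrm b}(\mathcal{P})$ and $\langle\mathcal{G}\rangle=D^{\mathrm b}(\mathcal{A})_{\widehat{\mathcal{G}}}$. The paper phrases the second identification as $\langle\mathcal{G}\rangle=K^{\mathrm b}(\mathcal{G})/K^{\mathrm b}_{ac}(\mathcal{G})\cong D^{\mathrm b}(\mathcal{A})_{\widehat{\mathcal{G}}}$ and declares both steps ``obvious'' and ``clear by definition'', whereas you spell out the stupid-truncation induction explicitly; this is a difference in exposition, not in strategy.
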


\begin{proof}
Obviously, we have
$\langle  \mathcal{P}\rangle=K^\mathrm{b} (\mathcal{P})$
and $\langle\mathcal{G}\rangle=K^\mathrm{b} (\mathcal{G})/K^\mathrm{b}_{ac} (\mathcal{G})$,
where $K^\mathrm{b}_{ac} (\mathcal{G})$ is the subcategory of $K^\mathrm{b} (\mathcal{G})$
consisting of all acyclic complexes.
Hence, to complete the proof, by Corollary \ref{equivalence},
we need only to show that $K^\mathrm{b} (\mathcal{G})/K^\mathrm{b}_{ac} (\mathcal{G})\cong D^\mathrm{b}(\mathcal{A})_{\widehat{\mathcal{G}}}$.
Indeed, it is clear by the definition of $\mathcal{G}$-projective dimension for homology bounded complexes.
\end{proof}

Recall that an object $M$ in $\mathcal{A}$ is called \emph{Gorenstein projective} \cite{EJ2000}
if there exists an exact complex $P$ of projective objects
such that $M$ is isomorphic to a cokernel of $P$
and the complex $\Hom_{\mathcal{A}}(P,Q)$ is still exact whenever $Q$ is a projective object.
Denote by $\mathcal {GP}$ the subcategory of all Gorenstein projective objects in $\mathcal{A}$.

Clearly, the subcategory $\mathcal {GP}$ is a projective Frobenius subcategory of $\mathcal{A}$.
Hence, by Corollary \ref{key cor}, we have

\begin{cor}\label{Gproj}
Let $\mathcal{A}$ be an abelian category with enough projective objects.
Then there exists a triangle equivalence
$$\underline{\mathcal {GP}}\,\,\simeq\,\, D^\mathrm{b}(\mathcal{A})_{\widehat{\mathcal{GP}}}/ K^\mathrm{b} (\mathcal{P}).$$
\end{cor}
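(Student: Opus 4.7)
The plan is to reduce this statement directly to the previous Corollary \ref{key cor}: it suffices to verify that the subcategory $\mathcal{GP}$ of Gorenstein projective objects is a projective Frobenius subcategory of $\mathcal{A}$ in the sense of Definition \ref{Projective Frobenius}. Once this is done, Corollary \ref{key cor} applied to $\mathcal{G}=\mathcal{GP}$ yields the triangle equivalence $\underline{\mathcal{GP}}\simeq D^{\mathrm{b}}(\mathcal{A})_{\widehat{\mathcal{GP}}}/K^{\mathrm{b}}(\mathcal{P})$ immediately.

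First I would check that $\mathcal{GP}$ is resolving. Every projective $P$ admits the trivially split complete projective resolution $\cdots\to 0\to P\xrightarrow{\mathrm{id}} P\to 0\to\cdots$, so $\mathcal{P}\subseteq \mathcal{GP}$. Closure under extensions and under kernels of epimorphisms are classical: given a short exact sequence $0\to M'\to M\to M''\to 0$ with chosen endpoints in $\mathcal{GP}$, one splices complete projective resolutions using the horseshoe-type construction (valid because $\Ext^{\geqslant 1}_{\mathcal{A}}(M'',\mathcal{P})=0$) to produce one for $M$; a similar splicing gives the kernel statement.

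Next I would verify that $\mathcal{GP}$, equipped with the exact structure induced from $\mathcal{A}$, is a Frobenius subcategory whose projective-injective objects coincide with $\mathcal{P}$. The three ingredients are: (i) every $M\in\mathcal{GP}$ fits into short exact sequences $0\to M'\to P\to M\to 0$ and $0\to M\to P''\to M''\to 0$ with $P,P''\in\mathcal{P}$ and $M',M''\in\mathcal{GP}$, obtained from a complete projective resolution of $M$ (so $\mathcal{GP}$ has enough projectives and enough injectives in its exact structure); (ii) $\Ext^{1}_{\mathcal{A}}(M,P)=0=\Ext^{1}_{\mathcal{A}}(P,M)$ for every $M\in\mathcal{GP}$ and $P\in\mathcal{P}$, which shows that every object of $\mathcal{P}$ is both projective and injective in $\mathcal{GP}$; (iii) conversely, if $X\in\mathcal{GP}$ is projective-injective, then the sequence from (i) splits, exhibiting $X$ as a direct summand of a projective object of $\mathcal{A}$, hence $X\in\mathcal{P}$. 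Applying Lemma \ref{mutation} then confirms the Frobenius structure if desired, but the direct verification above is already enough to match Definition \ref{Projective Frobenius}.

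I do not expect a genuine obstacle here: all of the above are folklore properties of Gorenstein projective objects that go back to Enochs--Jenda and have been developed in the general abelian setting by several authors. The only mild point is that the arguments are often stated in module categories, so I would just note that the assumption that $\mathcal{A}$ has enough projectives is all that is needed for the module-theoretic proofs to transfer verbatim. With the verification of Definition \ref{Projective Frobenius} in hand, Corollary \ref{key cor} delivers the desired triangle equivalence, and the proof is complete in a single line of invocation.
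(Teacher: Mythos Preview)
Your proposal is correct and follows exactly the same route as the paper: the paper simply observes that $\mathcal{GP}$ is a projective Frobenius subcategory of $\mathcal{A}$ and then invokes Corollary~\ref{key cor}. You have merely spelled out in detail the folklore verification that the paper records with the word ``clearly''.
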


\begin{rem}\label{Buchweitz}
{\rm
(1) In 1986, Buchweitz \cite{Buch} established the following famous triangle equivalence over an \emph{Iwanaga-Gorenstein} ring $R$
(that is, $R$ is a left and right Noetherian ring with finite self-injective dimension on both sides (see, e.g., \cite{EJ2000}))
$$\quad\quad\quad\underline{\mathcal{G}p}\,\,\simeq\,\,D^\mathrm{b}(\textrm{mod}\,R)/ K^\mathrm{b}(\textrm{proj}\,R)\quad\quad\quad(\natural),$$
where $\mathcal{G}p$ denotes the subcategory of mod$\,R$
consisting of all finitely generated Gorenstein projective right $R$-modules.

Assume now that $R$ is a right Noetherian ring.
Then mod$\,R$ is an abelian category with enough projective objects.
Thus, by Corollary \ref{Gproj},
we obtain in this case the triangle equivalence
$$\underline{\mathcal{G}p}\,\,\simeq\,\,D^\mathrm{b}(\textrm{mod}\,R)_{\widehat{\mathcal{G}p}}/ K^\mathrm{b}(\textrm{proj}\,R),$$
which extends $(\natural)$ from Iwanaga-Gorenstein rings to arbitrary Noetherian rings.

(2) The `big' version of the triangle equivalence $(\natural)$ was proved by Beligiannis \cite{BE1}
over a ring $R$ which has finite right Gorenstein global dimension
(that is, a ring satisfies that the supremum of Gorenstein projective dimension of all right $R$-modules is finite).
More specifically, over such a ring $R$, Beligiannis showed that
there exists a triangle equivalence
$$\quad\quad\quad\underline{\mathcal{G}P}\,\,\simeq\,\,D^\mathrm{b}(\M\,R)/ K^\mathrm{b}(\textrm{Proj}\,R)\quad\quad\quad(\sharp)$$
in which $\mathcal{G}P$ denotes the subcategory of $\M\,R$ consisting of all Gorenstein projective right $R$-modules
(indeed, in Beligiannis' work \cite{BE1},
he established $(\sharp)$ over rings satisfying that any projective right module has finite injective dimension
and any injective right module has finite projective dimension.
He called therein such rings \emph{right Gorenstein rings}.
However, in view of \cite[Theorem 6.9]{BE1} and \cite[Theorem 4.1]{E4},
we see that right Gorenstein rings are just rings having finite right Gorenstein global dimension).

Let now $R$ be an arbitrary ring.
Then as a consequence of Corollary \ref{Gproj},
we see that the following triangle equivalence holds true
$$\underline{\mathcal{G}P}\,\,\simeq\,\,D^\mathrm{b}(\M\,R)_{\widehat{\mathcal{G}P}}/ K^\mathrm{b}(\textrm{Proj}\,R).$$
It generalizes $(\sharp)$ from rings with finite Gorenstein global dimension to arbitrary rings.
}\end{rem}

\subsection{\bf The converse of Buchweitz's triangle equivalence}

As usual, denote by $\G_\mathcal{A}\,M$ the Gorenstein projective dimension of an object $M\in\mathcal{A}$.
When we consider a right $R$-module $M$ in $\M\,R$ or mod$\,R$,
the Gorenstein projective dimension of $M$ will be denoted simply by $\G_R\,M$.

\begin{lem}\label{lem4.10}
Let $\mathcal{A}$ be an abelian category with enough projective objects.
Then every object in $\mathcal{A}$ has finite Gorenstein projective dimension if and only if
$$D^{\rm b}(\mathcal{A})\,\,=\,\,D^{\rm b}(\mathcal{A})_{\widehat{\mathcal {G}\mathcal {P}}}.$$
\end{lem}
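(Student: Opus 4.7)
The plan is to bridge the two dimensions using Remark 4.7(3), which identifies the $\mathcal{GP}$-projective dimension of an object $M\in\mathcal{A}$ (viewed as a stalk complex) with its classical Gorenstein projective dimension $\G_\mathcal{A}\,M$. Once that identification is available, the forward implication is propagated by an induction along truncation triangles using the two-out-of-three principle of Lemma 4.8.

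The \emph{if} direction is immediate: under the hypothesis $D^{\rm b}(\mathcal{A})=D^{\rm b}(\mathcal{A})_{\widehat{\mathcal{GP}}}$, any $M\in\mathcal{A}$ sits in $D^{\rm b}(\mathcal{A})_{\widehat{\mathcal{GP}}}$ as a stalk complex, and Remark 4.7(3) then gives $\G_\mathcal{A}\,M<\infty$. For the \emph{only if} direction I would argue by induction on the cohomological amplitude
\[ \mathrm{amp}(X):=\sup\{i\mid \h_i(X)\neq0\}-\inf\{i\mid \h_i(X)\neq0\} \]
of $X\in D^{\rm b}(\mathcal{A})$, with the convention that an acyclic complex has amplitude $-\infty$. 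In the base case $X$ is acyclic or concentrated in a single degree, so $X\simeq \h_n(X)[n]$ for some $n\in\mathbb{Z}$; the hypothesis together with Remark 4.7(3) gives $\mathcal{GP}\textrm{-}\dim \h_n(X)<\infty$, whence Remark 4.7(1) and (2) imply $X\in D^{\rm b}(\mathcal{A})_{\widehat{\mathcal{GP}}}$. For the inductive step, letting $n$ be the largest degree with nonzero homology, the soft-truncation triangle
\[ \tau_{<n}X\,\to\, X\,\to\, \h_n(X)[n]\,\to\, (\tau_{<n}X)[1] \]
in $D^{\rm b}(\mathcal{A})$ has two vertices of finite $\mathcal{GP}$-projective dimension (the first by the induction hypothesis, since $\mathrm{amp}(\tau_{<n}X)<\mathrm{amp}(X)$, and the third by the base-case argument applied to $\h_n(X)\in\mathcal{A}$), so Lemma 4.8 forces $X\in D^{\rm b}(\mathcal{A})_{\widehat{\mathcal{GP}}}$, closing the induction.

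The argument is essentially formal once Remark 4.7(3) is in hand, so there is no genuine obstacle; the only point requiring a sliver of care is to confirm that $D^{\rm b}(\mathcal{A})_{\widehat{\mathcal{GP}}}$ is closed under the equivalence ``$\simeq$'' of complexes and under shifts, so that replacing $X$ by a convenient representative (and treating the shifted stalk $\h_n(X)[n]$) is permitted within the induction; but these are precisely Remark 4.7(2) and Remark 4.7(1) respectively.
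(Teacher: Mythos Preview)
Your proof is correct but takes a different route from the paper's. For the only-if direction, the paper chooses a bounded-above projective resolution $P \simeq M$, notes that the cokernel $\C_s(P)$ (with $s = \inf\{i \mid \h_i(M) \neq 0\}$) lies in $\mathcal{A}$ and hence has finite Gorenstein projective dimension by hypothesis, and then invokes Holm's syzygy theorem \cite[Theorem 2.11]{Holm4} to conclude that a deeper cokernel $\C_{s-n}(P)$ is Gorenstein projective---so a brutal truncation of $P$ furnishes the desired bounded $\mathcal{GP}$-complex directly. Your argument is instead a d\'evissage by induction on cohomological amplitude, using soft-truncation triangles and the two-out-of-three principle of Lemma~\ref{second in third}. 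The paper's approach is constructive and produces an explicit resolution in one stroke, at the cost of importing an external result of Holm; yours is purely formal, stays within the paper's own toolkit (Remark~\ref{closed under equi} and Lemma~\ref{second in third}), and adapts verbatim to any projective Frobenius subcategory $\mathcal{G}$ in place of $\mathcal{GP}$. One cosmetic point: with the paper's indexing conventions the stalk in your truncation triangle should read $\h_n(X)[-n]$ rather than $\h_n(X)[n]$, but since Remark~\ref{closed under equi}(1) handles arbitrary shifts this is immaterial to the argument.
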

\begin{proof}
The if-part is obvious.

For the only-if-part,
we need to show that any homology bounded complex $M$
has finite Gorenstein projective dimension.
To this end, let $P$ be a bounded-above complex of projective objects such that $P\cong M$ in $D^{\rm b}(\mathcal{A})$,
and suppose that $\inf\{i\in\mathbb{Z}\,|\,\h_{i}(M)\neq0\} =s$ for some integer $s$.
Then $P$ is exact in degrees $i< s$.
This implies that the following sequence
$$\cdots\to P_{s-1} \to P_{s}\to \C_s(P)\to 0\to\cdots\indent\indent(\ddag)$$
is exact.
Note that $(\ddag)$ is indeed a projective resolution of $\C_s(P)$
and $\C_s(P)$ has finite Gorenstein projective dimension by assumption.
It follows from \cite[Theorem 2.11]{Holm4} that $\C_{s-n}(P)$ is Gorenstein projective,
where $n=\G_\mathcal{A}\,\C_{s}(P)$.
Thus, we conclude that $M$ has finite Gorenstein projective dimension,
as desired.
\end{proof}

As a consequence of Corollary \ref{Gproj} and Lemma \ref{lem4.10},
we have the next result.

\begin{cor}\label{cor4.9}
Let $\mathcal{A}$ be an abelian category with enough projective objects.
Then every object in $\mathcal{A}$ has finite Gorenstein projective dimension if and only if
there exists a triangle equivalence
$$\underline{\mathcal {G}\mathcal {P}}\,\, \simeq\,\,
  D^{\rm b}(\mathcal{A})/ K^{\rm b}(\mathcal {P}).$$
\end{cor}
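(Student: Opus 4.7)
The plan is to prove the two directions separately, using Corollary~\ref{Gproj} and Lemma~\ref{lem4.10} as the main input.

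For the only-if direction, I would simply combine the two preceding results. Assuming every object in $\mathcal{A}$ has finite Gorenstein projective dimension, Lemma~\ref{lem4.10} delivers the identity $D^{\rm b}(\mathcal{A}) = D^{\rm b}(\mathcal{A})_{\widehat{\mathcal{GP}}}$. Substituting this into the triangle equivalence $\underline{\mathcal{GP}} \simeq D^{\rm b}(\mathcal{A})_{\widehat{\mathcal{GP}}} / K^{\rm b}(\mathcal{P})$ from Corollary~\ref{Gproj} immediately yields $\underline{\mathcal{GP}} \simeq D^{\rm b}(\mathcal{A}) / K^{\rm b}(\mathcal{P})$.

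For the if direction, the strategy is to reverse the above manipulation. The natural functor $\mathcal{GP} \hookrightarrow D^{\rm b}(\mathcal{A}) \to D^{\rm b}(\mathcal{A})/K^{\rm b}(\mathcal{P})$ kills $\mathcal{P}$, hence factors through $\underline{\mathcal{GP}}$ to produce a triangle functor $\Phi : \underline{\mathcal{GP}} \to D^{\rm b}(\mathcal{A})/K^{\rm b}(\mathcal{P})$, which in turn decomposes as $\Phi = \iota \circ \Psi$, where $\Psi$ is the equivalence of Corollary~\ref{Gproj} and $\iota$ is the functor induced by the inclusion $D^{\rm b}(\mathcal{A})_{\widehat{\mathcal{GP}}} \hookrightarrow D^{\rm b}(\mathcal{A})$. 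Since $D^{\rm b}(\mathcal{A})_{\widehat{\mathcal{GP}}}$ is a thick subcategory of $D^{\rm b}(\mathcal{A})$ containing the thick subcategory $K^{\rm b}(\mathcal{P})$, a standard fact about Verdier quotients makes $\iota$ fully faithful, so that $\Phi$ itself is fully faithful. Interpreting the hypothesized equivalence as $\Phi$, it must also be essentially surjective, and hence so is $\iota$.

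The key step is now to pass from essential surjectivity of $\iota$ to the equality $D^{\rm b}(\mathcal{A}) = D^{\rm b}(\mathcal{A})_{\widehat{\mathcal{GP}}}$. Given any $X \in D^{\rm b}(\mathcal{A})$, essential surjectivity of $\iota$ produces some $Y \in D^{\rm b}(\mathcal{A})_{\widehat{\mathcal{GP}}}$ with $X \cong Y$ in $D^{\rm b}(\mathcal{A})/K^{\rm b}(\mathcal{P})$, which by the calculus of fractions is realized by a roof $X \leftarrow Z \to Y$ whose two legs fit into triangles with third term in $K^{\rm b}(\mathcal{P}) \subseteq D^{\rm b}(\mathcal{A})_{\widehat{\mathcal{GP}}}$. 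Applying the thickness of $D^{\rm b}(\mathcal{A})_{\widehat{\mathcal{GP}}}$ twice, first to the triangle for $Z \to Y$ and then to the one for $Z \to X$, forces $X \in D^{\rm b}(\mathcal{A})_{\widehat{\mathcal{GP}}}$. Combining $D^{\rm b}(\mathcal{A}) = D^{\rm b}(\mathcal{A})_{\widehat{\mathcal{GP}}}$ with the if direction of Lemma~\ref{lem4.10}, every object of $\mathcal{A}$ has finite Gorenstein projective dimension.

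The main obstacle is the identification of the abstract triangle equivalence in the hypothesis with the canonical functor $\Phi$ coming from the stalk embedding; the proof really uses that the natural $\Phi$ is the equivalence, which is justified by the fact that $\Phi$ is already established to be fully faithful and that the hypothesis only needs to be promoted to essential surjectivity of $\Phi$ itself. Once this interpretation is fixed, the rest is a routine Verdier quotient manipulation combined with the thickness of $D^{\rm b}(\mathcal{A})_{\widehat{\mathcal{GP}}}$ and an appeal to Lemma~\ref{lem4.10}.
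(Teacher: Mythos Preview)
Your approach matches the paper's, which simply records the result as an immediate consequence of Corollary~\ref{Gproj} and Lemma~\ref{lem4.10} without further argument. Your only-if direction is exactly the intended one, and your elaboration of the if direction (factoring the canonical functor as $\iota\circ\Psi$, invoking full faithfulness of the induced functor between Verdier quotients when the smaller thick subcategory contains the localizing class, and the thickness argument to recover $D^{\rm b}(\mathcal{A})=D^{\rm b}(\mathcal{A})_{\widehat{\mathcal{GP}}}$) is a correct and useful unpacking of what the paper leaves implicit.

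You are right to flag the interpretation issue, but your attempted justification does not actually work: knowing that $\Phi$ is fully faithful and that \emph{some} abstract triangle equivalence exists does not force $\Phi$ itself to be essentially surjective, since a triangulated category can be equivalent to a proper thick subcategory of itself. The paper does not address this point either; it tacitly reads ``there exists a triangle equivalence'' as ``the canonical functor induced by $\mathcal{GP}\hookrightarrow D^{\rm b}(\mathcal{A})$ is an equivalence,'' which is the standard convention in this literature and is how the downstream Corollaries~\ref{inverse1}, \ref{inverse2} and \ref{commutative N} are meant to be understood. So the gap you identify is genuine and is shared with the paper; the honest fix is to state that convention explicitly rather than to argue around it.
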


It is well-known that mod$\,R$ is an abelian category if and only if $R$ is a right Noetherian ring,
and that in this case mod$\,R$ has enough projective objects.
Hence, by Corollary \ref{cor4.9}, we obtain the coming result.
It gives the converse of the Buchweitz's triangle equivalence (see Remark \ref{Buchweitz}(1)).

\begin{cor}\label{inverse1}
Let $R$ be a right Noetherian ring.
Then the following statements are equivalent:

$(1)$ Every finitely generated right $R$-module has finite Gorenstein projective dimension.

$(2)$ There exists a triangle equivalence
      $\underline{\mathcal {G}p}\simeq D^\mathrm{b}(\mathrm{mod}\,R)/ K^\mathrm{b}(\mathrm{proj}\,R).$
\end{cor}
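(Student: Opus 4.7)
The plan is to deduce this result as a direct specialization of Corollary \ref{cor4.9}, taking the abelian category $\mathcal{A}$ to be $\mathrm{mod}\,R$. Since the only hypothesis of Corollary \ref{cor4.9} is that $\mathcal{A}$ be an abelian category with enough projective objects, the first thing I would do is record why $\mathrm{mod}\,R$ has this property in our setting: the ring $R$ is assumed to be right Noetherian, so kernels of morphisms between finitely generated modules are again finitely generated, which makes $\mathrm{mod}\,R$ abelian; moreover, every finitely generated right $R$-module admits a surjection from a finitely generated free (hence projective) module, so $\mathrm{mod}\,R$ has enough projectives, and these projectives are precisely the objects of $\mathrm{proj}\,R$.

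Next I would identify the relevant subcategories under this specialization. With $\mathcal{A}=\mathrm{mod}\,R$, the subcategory $\mathcal{P}$ of projective objects coincides with $\mathrm{proj}\,R$, and the subcategory $\mathcal{GP}$ of Gorenstein projective objects in $\mathcal{A}$ is exactly the subcategory $\mathcal{G}p$ of finitely generated Gorenstein projective right $R$-modules. With these identifications, the condition ``every object in $\mathcal{A}$ has finite Gorenstein projective dimension'' translates verbatim into statement (1), and the triangle equivalence
\[
\underline{\mathcal{GP}} \,\simeq\, D^{\mathrm{b}}(\mathcal{A})/K^{\mathrm{b}}(\mathcal{P})
\]
provided by Corollary \ref{cor4.9} becomes precisely statement (2). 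So both directions of the equivalence (1)$\Leftrightarrow$(2) fall out immediately from Corollary \ref{cor4.9}.

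There is essentially no technical obstacle here, since the real content has already been packaged into Corollary \ref{cor4.9} (and behind it, into Corollary \ref{Gproj} and Lemma \ref{lem4.10}). The only point one must be careful about is the compatibility of the Gorenstein projective notion across the two settings: a module $M \in \mathrm{mod}\,R$ is Gorenstein projective as an object of the abelian category $\mathrm{mod}\,R$ if and only if it admits a totally acyclic complex of finitely generated projectives with $M$ as a cokernel, which, because $R$ is right Noetherian and $M$ is finitely generated, agrees with the standard definition of finitely generated Gorenstein projective module. Once this identification is noted, the proof is complete in one line: apply Corollary \ref{cor4.9}.
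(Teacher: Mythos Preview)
Your proposal is correct and follows exactly the paper's approach: the paper simply remarks that $\mathrm{mod}\,R$ is abelian with enough projectives when $R$ is right Noetherian, and then invokes Corollary~\ref{cor4.9}. Your additional comment on why the Gorenstein projective objects of $\mathrm{mod}\,R$ coincide with $\mathcal{G}p$ is a nice point of care that the paper leaves implicit.
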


The next result have been obtained by Beligiannis in \cite[Theorem 6.9]{BE1}.
We can also obtain it by Corollary \ref{cor4.9} together with
the fact that a ring $R$ has finite right Gorenstein global dimension
if and only if every right $R$-module has finite Gorenstein projective dimension
(see \cite[Theorem 4.1]{E4}).

\begin{cor}$($\cite[Theorem 6.9]{BE1}$)$\label{inverse2}
Let $R$ be a ring.
Then the following statements are equivalent:

$(1)$ $R$ has finite right Gorenstein global dimension.

$(2)$ There exists a triangle equivalence
      $\underline{\mathcal {G}P} \simeq D^\mathrm{b}(\M\,R)/ K^\mathrm{b}(\mathrm{Proj}\,R).$
\end{cor}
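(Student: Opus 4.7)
The plan is to obtain this statement as a direct corollary of Corollary \ref{cor4.9} applied to the abelian category $\mathcal{A} = \M\,R$, combined with a known characterization of rings of finite right Gorenstein global dimension. First I would verify that $\M\,R$ satisfies the hypothesis of Corollary \ref{cor4.9}: it is abelian, and it has enough projective objects (free modules are projective). In this setting, the subcategory $\mathcal{P}$ of projective objects is precisely $\mathrm{Proj}\,R$, and the subcategory $\mathcal{GP}$ of Gorenstein projective objects is precisely the subcategory $\mathcal{G}P$ featuring in the statement. Thus Corollary \ref{cor4.9} immediately yields the equivalence of the following two conditions:
\begin{center}
(2$'$) Every object of $\M\,R$ has finite Gorenstein projective dimension;\\
(2) There exists a triangle equivalence $\underline{\mathcal{G}P}\simeq D^{\mathrm{b}}(\M\,R)/K^{\mathrm{b}}(\mathrm{Proj}\,R)$.
\end{center}

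Next, I would invoke the result of Enochs--Jenda (or rather its refinement by Enochs and Iacob, cited as \cite[Theorem 4.1]{E4} in the paper) which asserts that $R$ has finite right Gorenstein global dimension if and only if every right $R$-module has finite Gorenstein projective dimension. This identifies condition (2$'$) above with condition (1) of the statement. Chaining the two equivalences then delivers (1) $\Leftrightarrow$ (2), completing the proof.

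There is essentially no obstacle here beyond quoting the two ingredients correctly: all the substantive work has already been carried out, namely the triangle equivalence
$\underline{\mathcal{GP}}\simeq D^{\mathrm{b}}(\mathcal{A})_{\widehat{\mathcal{GP}}}/K^{\mathrm{b}}(\mathcal{P})$
of Corollary \ref{Gproj} (which rests on the realization theorem \ref{thm1} via the mutation-pair reformulation in Lemma \ref{mutation}), the characterization in Lemma \ref{lem4.10} that $D^{\mathrm{b}}(\mathcal{A}) = D^{\mathrm{b}}(\mathcal{A})_{\widehat{\mathcal{GP}}}$ exactly when every object has finite Gorenstein projective dimension, and the Enochs--Iacob characterization of finite right Gorenstein global dimension. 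The only minor care to take is noting that Corollary \ref{cor4.9} is stated for abelian categories with enough projectives and so applies to $\M\,R$ without any Noetherian hypothesis, which is what allows the statement to be formulated for an arbitrary ring $R$.
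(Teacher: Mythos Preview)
Your proposal is correct and matches the paper's own argument exactly: apply Corollary~\ref{cor4.9} to $\mathcal{A}=\M\,R$ and then invoke \cite[Theorem~4.1]{E4} to identify ``finite right Gorenstein global dimension'' with ``every right $R$-module has finite Gorenstein projective dimension.'' One small correction: the reference \cite{E4} is a paper of Emmanouil, not Enochs--Iacob.
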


Based on Corollary \ref{inverse1} and Corollary \ref{inverse2},
we obtain the following result,
which can characterize Iwanaga-Gorenstein rings and Gorenstein algebras.
Note that the equivalence of (1) and (3) is a special case of Bergh, J{\o}rgensen and Oppermann's result \cite[Theorem 3.6]{BJS},
where it is proved that if $R$ is either a left and right Artin ring or
a commutative Noetherian local ring,
then there exists a triangle equivalence
$\underline{\mathcal {G}p} \simeq D^\mathrm{b}(\mathrm{mod}\,R)/ K^\mathrm{b}(\mathrm{proj}\,R)$
if and only if $R$ is Gorenstein.

\begin{cor}\label{commutative N}
Let $R$ be a left and right Noetherian ring.
Then the following statements are equivalent:

$(1)$ $R$ is Iwanaga-Gorenstein.

$(2)$ There exists a triangle equivalence
      $\underline{\mathcal {G}P} \simeq D^\mathrm{b}(\M\,R)/ K^\mathrm{b}(\mathrm{Proj}\,R).$\\
If $R$ is further an Artin algebra,
then the above two conditions are equivalent to

$(3)$  There exists a triangle equivalence
      $\underline{\mathcal {G}p} \simeq D^\mathrm{b}(\mathrm{mod}\,R)/ K^\mathrm{b}(\mathrm{proj}\,R).$
\end{cor}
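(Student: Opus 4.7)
Proof proposal. The strategy is to use Corollaries \ref{inverse2} and \ref{inverse1} to translate (2) and (3) into homological conditions on Gorenstein projective dimensions, and then to bridge these to condition (1) via a standard Ext-vanishing argument combined with Iwanaga's classical theorem that for a left and right Noetherian ring $R$ one has $\mathrm{id}(R_R)<\infty$ if and only if $\mathrm{id}({_RR})<\infty$, these two quantities being equal in that case.

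For (1) $\Leftrightarrow$ (2), the direction (1) $\Rightarrow$ (2) is immediate: if $R$ is Iwanaga--Gorenstein with $\mathrm{id}(R_R)=n$, then every right $R$-module has Gorenstein projective dimension at most $n$, so $R$ has finite right Gorenstein global dimension and Corollary \ref{inverse2} provides the desired equivalence. Conversely, for (2) $\Rightarrow$ (1), Corollary \ref{inverse2} supplies a uniform bound $n$ on the Gorenstein projective dimensions of all right $R$-modules. Since $\Ext^{i}(G,R_R)=0$ for every Gorenstein projective $G$ and every $i>0$ (as $R_R$ is projective), dimension-shifting along a Gorenstein projective resolution of length $\leqslant n$ forces $\Ext^{>n}(M,R_R)=0$ for every right $R$-module $M$. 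Hence $\mathrm{id}(R_R)\leqslant n<\infty$, and Iwanaga's theorem then yields $\mathrm{id}({_RR})<\infty$, so $R$ is Iwanaga--Gorenstein.

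When $R$ is further an Artin algebra, the equivalence (1) $\Leftrightarrow$ (3) proceeds analogously. The implication (1) $\Rightarrow$ (3) follows by restricting the uniform bound on Gorenstein projective dimensions to finitely generated modules and invoking Corollary \ref{inverse1}. For (3) $\Rightarrow$ (1), Corollary \ref{inverse1} gives that every simple right $R$-module has finite Gorenstein projective dimension; since an Artin algebra has only finitely many isomorphism classes of simples, one takes $n$ to be the maximum of these dimensions and deduces $\Ext^{>n}(S,R)=0$ for every simple $S$. An induction on composition length extends this to $\Ext^{>n}(M,R)=0$ for every finitely generated right $R$-module $M$, whence $\mathrm{id}(R_R)\leqslant n$ (using that $R$ is right Noetherian, so injective dimension is detected on finitely generated modules). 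Iwanaga's theorem again concludes that $R$ is Iwanaga--Gorenstein. Alternatively, this direction is a special case of the Bergh--J{\o}rgensen--Oppermann theorem \cite[Theorem 3.6]{BJS} cited in the statement.

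The main obstacle is the passage, in both (2) $\Rightarrow$ (1) and (3) $\Rightarrow$ (1), from finiteness of \emph{right} Gorenstein projective dimensions to \emph{two-sided} finiteness of the self-injective dimension of $R$. The right-side bound $\mathrm{id}(R_R)<\infty$ emerges cleanly from Ext-vanishing against Gorenstein projective resolutions, but transferring this to the left side genuinely requires the two-sided Noetherian hypothesis and invokes Iwanaga's theorem on the symmetry of finite self-injective dimension; without this symmetry result the corollary would collapse to a one-sided statement.
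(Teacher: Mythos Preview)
Your overall architecture matches the paper's: reduce (2) and (3) to homological finiteness via Corollaries~\ref{inverse2} and~\ref{inverse1}, then bridge to (1). The gap is in the bridge. The ``Iwanaga's classical theorem'' you invoke---that for a two-sided Noetherian ring $\mathrm{id}(R_R)<\infty$ if and only if $\mathrm{id}({_RR})<\infty$---is not a theorem. What Zaks proved is that \emph{if both} self-injective dimensions are finite then they coincide; the one-sided implication you use is precisely the Gorenstein symmetry conjecture, which remains open even for Artin algebras. So both your (2)$\Rightarrow$(1) and (3)$\Rightarrow$(1) arguments collapse at exactly the step you flagged as ``the main obstacle.''

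The paper avoids this trap by not discarding information. From Corollary~\ref{inverse2} you obtain finite right Gorenstein global dimension, which is strictly stronger than $\mathrm{id}(R_R)<\infty$: it also forces every injective right module to have finite projective dimension. The paper then appeals to \cite[Theorem~12.3.1]{EJ2000} as a black box for the equivalence ``$R$ Iwanaga--Gorenstein $\Leftrightarrow$ finite right Gorenstein global dimension'' over two-sided Noetherian rings; the point is that this equivalence uses the full package coming out of Corollary~\ref{inverse2}, not just the one consequence $\mathrm{id}(R_R)<\infty$ that you extract. For (1)$\Leftrightarrow$(3) the paper similarly cites \cite[Theorem~4.16]{BE1} to upgrade ``every finitely generated module has finite Gorenstein projective dimension'' to a uniform bound, and then invokes \cite[Theorem~12.3.1]{EJ2000} again. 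Your argument via simple modules and composition length is a pleasant alternative to \cite[Theorem~4.16]{BE1} for producing the uniform bound on $\Ext^{>n}(-,R_R)$, but it still lands you at $\mathrm{id}(R_R)<\infty$ only, and you cannot get to the left side without either the cited black box or an independent argument that does not reduce to the symmetry conjecture.
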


\begin{proof}
According to \cite[Theorem 12.3.1]{EJ2000},
we know that a left and right Noetherian ring is Iwanaga-Gorenstein
if and only if it has finite right Gorenstein global dimension.
Hence, in view of Corollary \ref{inverse2},
we deduce that the statements (1) and (2) are equivalent.

Suppose that $R$ is further an Artin algebra.
Then the category $\mathrm{mod}\,R$ is an abelian category with enough projectives and injectives.
It follows from \cite[Theorem 4.16]{BE1} that
if every finitely generated right $R$-module has finite Gorenstein projective dimension,
then their Gorenstein projective dimension has a supremum.
Hence, by virtue of \cite[Theorem 12.3.1]{EJ2000} again,
we see that $R$ is a Gorenstein algebra if and only if every finitely generated right $R$-module has finite Gorenstein projective dimension.
Thus, by Corollary \ref{inverse1}, we conclude that the equivalence of (1) and (3) holds true.
This completes the proof.
\end{proof}

\centerline {\bf Acknowledgments}
The authors would like to thank the anonymous referee for many helpful comments and suggestions.
The authors are indebted to Xiaoxiang Zhang, Xiaowu Chen, Yu Ye and Junpeng Wang for their helpful discussions.
This work is supported by the National Natural Science Foundation of China (Grant nos. 11601433, 11261050 and 11771212),
the Postdoctoral Science Foundation of China (Grant no. 2106M602945XB)
and Northwest Normal University (Grant no. NWNU-LKQN-15-12).

\end{document}